\definecolor{ggreen}{rgb}{0,0.75,0.08}
\theoremstyle{plain}
\theoremstyle{definition}
\newtheorem{theorem}{Theorem}[section]
\newtheorem{lemma}[theorem]{Lemma}
\newtheorem{proposition}[theorem]{Proposition}
\newtheorem{corollary}[theorem]{Corollary}
\newtheorem{definition}[theorem]{Definition}
\newtheorem{question*}{Question}
\newtheorem{example}[theorem]{Example}
\newcommand{\A}{\ensuremath{\alpha}}
\newcommand{\K}{\ensuremath{\kappa}}
\newcommand{\B}{\ensuremath{\beta}}
\newcommand{\E}{\ensuremath{\varepsilon}}
\newcommand{\W}{\ensuremath{\omega}}
\newcommand{\G}{\ensuremath{\gamma}}
\newcommand{\GG}{\ensuremath{\Gamma}}
\newcommand{\RR}{\ensuremath{\mathbb R}}
\newcommand{\0}{\ensuremath{\varnothing}}
\newcommand{\cL}{\ensuremath{\mathcal L}}
\newcommand{\cD}{\ensuremath{\mathcal D}}
\newcommand{\cF}{\ensuremath{\mathcal F}}
\newcommand{\cP}{\ensuremath{\mathcal P}}
\newcommand{\sV}{\ensuremath{\mathscr V}}
\newcommand{\cC}{\ensuremath{\mathcal C}}
\newcommand{\sU}{\ensuremath{\mathscr U}}
\begin{document}

\openup 0.6em

\fontsize{13}{5}
\selectfont

	\begin{center}\LARGE Shore and Non-Block Points in Hausdorff \mbox{Continua}
	\end{center}
	
	\begin{align*}
	\text{\Large Daron Anderson }  \qquad \text{\Large Trinity College Dublin. Ireland }  
	\end{align*} 
	\begin{align*} \text{\Large andersd3@tcd.ie} \qquad \text{\Large Preprint February 2015}  
	\end{align*}$ $\\

	\begin{center}
		\textbf{ \large Abstract}
	\end{center}
	
	\noindent  We study the shore and non-block points of non-metric continua. 
	We reduce the problem of showing a continuum to have non-block points to that of showing an indecomposable continuum to have non-block points. 
	As a corollary we prove that separable continua have at least two non-block points -- and moreover are irreducible about their set of non-block points.

	\section{Introduction} 
	\noindent In 1923 Moore \cite{noncutmoore} proved that every metric continuum has two or more non-cut points. 
	Whyburn \cite{noncut} extended this result in 1968 to cover non-metric continua. 
	Shore points were introduced by Puga-Espinosa et al \cite{PugaPuga,Puga01,Puga03} 
	as a strengthening of the notion of a non-cut point  and used in their study of dendrites. 
	
	Recently Leonel \cite{Leonel01} has improved upon the result of Moore by showing every metric continuum has two or more shore points. 
	Bobok et al \cite{B} pointed out how the two points discovered by Leonel are not only shore points, 
	but satisfy a stronger property which they called being a non-block point. 
	
	As the metric assumption is not necessary to guarantee the existence of non-cut points, 
	this prompts the question of whether it is necessary to guarantee the existence of shore or non-block points. 
	
	We reduce the existence problem to the case of indecomposable continua. 
	Moreover we show that if each indecomposable member of a class of continua $\cP$ -- which is closed under certain quotient maps -- 
	has two or more non-block points then every member of $\cP$ has two or more non-block points, 
	and is moreover irreducible about its set of non-block points. 
	We apply this result to prove the existence of non-block points in separable continua. 
	\section{Notation and Terminology}
	
	\noindent For sets $A$ and $B$ define $A - B = \{ a \in A \colon a \notin B\}$. 
	If $B$ is the singleton $\{b\}$ we will write $A-b$ without confusion. 
	Whenever we write $A \subset B$ we do not presume $A$ is a proper subset of $B$.
	
	For a subset $S \subset X$ denote by $S^\circ$ and $\overline S$ the interior and closure of $S$ respectively. 
	The {\it boundary} of $S$ is the subset $\overline S \cap \overline {(X-S)}$.

	$X$ is a nondegenerate Hausdorff continuum. 
	That is to say a compact connected Hausdorff space that contains more than one point. 
	$C(X)$ denotes the hyperspace of subcontinua of $X$ with the Vietoris topology. 
	
	The subset $P \subset X$ is called a {\it cut set} if $X-P$ fails to be connected. 
	In this case $P$ is said to cut $X$. 
	If $\{p\}$ is a cut set, we say $p$ is a {\it cut point} and that $p$ cuts $X$. 
	If $p$ is not a cut point it is called a {\it non-cut point}.
	
	The subset $P \subset X$ is called a {\it shore set} if $X$ is the limit in $C(X)$ of a net of subcontinua of $X-P$. 
	Equivalently for each finite family of nonempty open sets $U_1, U_2, \ldots, U_n$ 
	some subcontinuum in $X-P$ meets each of $U_1, U_2, \ldots, U_n$. If $\{p\}$ is a shore set we say $p$ is a {\it shore point}.
	
	The subset $P \subset X$ is called a {\it non-block set} if there exists a family of subcontinua of $X-P$ 
	whose intersection is nonempty and whose union is dense in $X$. 
	If $\{p\}$ is a non-block set we say $p$ is a {\it non-block point}.
	
	We write $X = Y \oplus Z$ to mean that $Y$ and $Z$ are proper subcontinua of $X$ for which $X = Y \cup Z$. 
	Then $Y$ and $Z$ are said to form a {\it decomposition} of $X$. 
	In case $X$ admits no decomposition it is said to be {\it indecomposable}.
	The subcontinuum  $K \subset X$ is said to be {\it thick} if it is both proper and has nonvoid interior. 
	Being indecomposable is equivalent to having no thick subcontinua.
	
	For each $x \in X$ denote by $\kappa(x)$ the {\it composant} of $x$, meaning the union of all proper subcontinua that contain $x$.
	$X$ is called {\it aposyndetic at} $ x$ {\it with respect to} $y$ if $x$ is contained in the interior of a subcontinuum disjoint from $y$. 
	$X$ is called {\it aposyndetic at} $x$ if it is aposyndetic at $x$ with respect to each $y \ne x$. 
	$X$ is said to be {\it aposyndetic} if it is aposyndetic at each point. 
	$X$ is said to be {\it null-aposyndetic} at $x$ if it is aposyndetic at $x$ with respect to no other point of $X$. 
	In other words no proper subcontinuum contains $x$ in its interior. 
	In this case we also refer to the point $x$ as null-aposyndetic.
	
	The subset $A \subset B$ of a partially ordered set $B$ is said to be {\it cofinal} if each element of $B$ is bounded from above by an element of $A$. 
	Define the {\it cofinality} of a totally-ordered set $B$ as the unique least ordinal $\B$ which is order-isomorphic to a cofinal subset of $B$.

	\section{Coastal Continua}

	\noindent It was proved by Leonel \cite{Leonel01} that every metric continuum has two or more shore points. This improves the classical result of Moore \cite{noncutmoore} that every metric continuum has two or more non-cut points. Bobok et al \cite{B} observed how the two shore points discovered by Leonel are in fact non-block points. (They also show that a shore point of a metric continuum need not be a non-block point.) It is unknown whether these results extend to Hausdorff continua.
	
	\begin{definition}

		For a subcontinuum $K \subset X$ and subset $P \subset X$ define the {\it composant} of $ K$ {\it relative to} $P$:
		
		\begin{center}$ \displaystyle \kappa(K; P) = \bigcup \big \{M \in C(X) \colon K \subset M, M \ne X,M \cap P = \0 \big \}$ \end{center}
		
		When for example $K = \{x\}$ and $P = \{p\}$ we will write $\K(x;p)$. Note that if $P = \0$ then $\K(x;P)$ is the composant of $x$. That the composant of a point is dense follows from the boundary bumping theorem for Hausdorff continua. The proof is found in $\S$47, III Theorem 2 of \cite{kur2}.
	\end{definition}
	
	The following theorem has been proved by Bing \cite{Bing01} when $X$ is metric. The proof immediately generalizes to spaces whose degree of Baireness is no less than the weight.
	
	\begin{theorem} \label{Bing} For each $x \in X$ some $p\in X$ makes $\kappa(x;p)$ a dense subspace.\end{theorem}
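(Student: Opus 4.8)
The plan is to split according to whether the composant $\kappa(x)$ is all of $X$. Every proper subcontinuum containing $x$ is by definition contained in $\kappa(x)$, so if $\kappa(x) \neq X$ I would simply pick any $p \in X - \kappa(x)$: such a $p$ lies in no proper subcontinuum through $x$, hence the side condition $p \notin M$ in the definition of $\kappa(x;p)$ is automatic, giving $\kappa(x;p) = \kappa(x)$, which is dense by the boundary bumping theorem cited above. This already settles every indecomposable $X$, since there the composants are proper. Thus the whole difficulty is concentrated in the case $\kappa(x) = X$, where each point of $X$ shares a proper subcontinuum with $x$ and the cheap choice of $p$ is unavailable.

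For that case I would set up a category argument indexed by a base. Fix a base $\{V_\alpha : \alpha < w(X)\}$ of nonempty open sets, and for each $\alpha$ put $B_\alpha = \bigcap\{M \in C(X) : x \in M,\ M \neq X,\ M \cap V_\alpha \neq \0\}$. Density of $\kappa(x)$ guarantees the family is nonempty, so $B_\alpha$ is a closed proper set with $x \in B_\alpha \subseteq \kappa(x)$. The key reformulation is that $\kappa(x;p)$ meets $V_\alpha$ exactly when $p \notin B_\alpha$; hence $\kappa(x;p)$ is dense precisely when $p \notin \bigcup_\alpha B_\alpha$, and the theorem reduces to the single assertion $\bigcup_\alpha B_\alpha \neq X$. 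The aim is therefore to locate a point that is never a \emph{forced passage}: a $p$ such that every $V_\alpha$ can be reached from $x$ by a proper subcontinuum avoiding $p$.

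The main obstacle is that one cannot prove $\bigcup_\alpha B_\alpha \neq X$ by the naive route of showing each $B_\alpha$ nowhere dense and invoking the Baire property. A proper subcontinuum may be thick, so $B_\alpha$ can have nonempty interior: if, say, an arc is attached to $X$, then reaching anything beyond it forces the entire arc into every competing subcontinuum, and the corresponding $B_\alpha$ swallows that arc. Consequently $\bigcup_\alpha B_\alpha$ can be large and even comeager, so no meagreness estimate is available; the surviving good set may be as small as a single point. To find that point I would instead build, by recursion on $\alpha < w(X)$, proper subcontinua $M_\alpha \ni x$ with $M_\alpha \cap V_\alpha \neq \0$ that collectively skirt a reserved point: at each stage one grows a subcontinuum out of $x$ inside the open set $X - \{q\}$, using boundary bumping to push it up to the frontier of that set and rerouting around $q$ whenever a target would otherwise be reached only through $q$. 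This is exactly the step where I expect the degree-of-Baireness hypothesis to be essential and non-obvious, since it is what must prevent the $w(X)$ stages of this construction from exhausting $X$ and thereby guarantees a reserved point $p \notin \bigcup_\alpha B_\alpha$ survives. Making the boundary-bumping rerouting uniform across all stages, and pinning down precisely why Baireness at least the weight is the right strength to forbid exhaustion, is where the real work of the proof lies.
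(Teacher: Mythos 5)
Your reformulation of the theorem as the assertion $\bigcup_{\alpha<w(X)}B_\alpha\ne X$ is correct, and so is your diagnosis of the obstruction: the sets $B_\alpha$ can be thick (your attached-arc example is sound), so no direct category argument applies. The genuine gap is that the recursion you sketch for the case $\kappa(x)=X$ never overcomes this obstruction --- it is circular. If the reserved point $q$ is fixed before the recursion, then ``rerouting around $q$ whenever a target would otherwise be reached only through $q$'' is a contradiction in terms: $V_\alpha$ being reachable from $x$ only through $q$ says precisely that $q\in B_\alpha$, in which case no reroute exists; the claim that some $q$ admits a reroute at every stage \emph{is} the theorem. If instead $p$ is to be chosen after the recursion, you need $\bigcup_\alpha M_\alpha\ne X$, and no Baire hypothesis can deliver that while the $M_\alpha$ may be thick: take $X$ to be the harmonic fan (the cone over $\{0\}\cup\{1/n\colon n\in\NN\}$) and $x$ its vertex; then $X$ is an increasing union of countably many thick proper subcontinua containing $x$ (the sets $L_0\cup L_1\cup\cdots\cup L_n$, where $L_0$ is the limit leg), so even $\omega$ unsteered stages can exhaust the space. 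Worse, in that example the set of points $p$ witnessing coastality at $x$ is exactly $L_0$ minus the vertex, a nowhere dense set, so no category-style bookkeeping can locate $p$; the recursion must be steered by structural information about thick subcontinua that your sketch does not contain, and your closing sentence concedes that this --- ``the real work'' --- is missing. (A separate slip: indecomposability alone does not make composants proper for Hausdorff continua --- Bellamy's continuum has exactly one composant --- so that shortcut is only valid in the metric setting.)

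The missing idea, and the route the paper takes (for Theorem \ref{Bing} itself it simply cites Bing's metric proof, but Sections 4 and 5 give a self-contained argument that also yields the Baire generalizations), is to reduce \emph{first} to a situation where every relevant subcontinuum is nowhere dense, and only then run a chain construction. Assuming $X$ is not coastal at $x$, Theorem \ref{red} collapses the closure $M$ of a transfinitely grown nest of subcontinua around $x$, making the image of $x$ null-aposyndetic, with coastality pulling back through the bloom by Corollary \ref{pull}; Theorem \ref{split} and Lemma \ref{phi} then split this quotient into two indecomposable halves and collapse one, so that by Theorem \ref{''} one may assume the continuum is indecomposable --- and there every proper subcontinuum is nowhere dense. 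Only at that point does your intended argument go through: join $x$ to each point of a dense set by nowhere dense subcontinua, keep the increasing union proper via the nest lemma (Lemma \ref{nest}; note Baireness is needed only at the level of $d(X)$, not $w(X)$), and choose $p$ outside the resulting dense proper union (Theorem \ref{refindep}); in the metric case one can finish even more cheaply by choosing $p$ outside the now-proper composant of $x$. In short, the Baire hypothesis is not what prevents your $w(X)$ stages from exhausting the space --- the bloom reductions are; Baireness enters only after nowhere-density of all the subcontinua in play has been secured.
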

	
	This motivates the following definition.
	
	\begin{definition} The continuum $X$ is called {\it coastal} at $x \in X$ to mean $\kappa(x;p)$ is dense for some $p\ne x$. 
		We call $X$ {\it coastal} to mean it is coastal at each point.
		
	\end{definition}
	
	By Theorem~\ref{Bing} all metric continua are coastal. Leonel \cite{Leonel01} essentially proved the following. 
	
	\begin{theorem} A coastal continuum has at least two non-block points.\end{theorem}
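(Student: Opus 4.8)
The plan is to observe that any point $p$ witnessing coastality at a point $x$ is automatically a non-block point, and then to bootstrap this observation to manufacture two distinct such witnesses.

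First I would simply unpack the definitions. Fix any $x \in X$. Coastality at $x$ supplies a point $p \ne x$ for which $\kappa(x;p)$ is dense. By definition $\kappa(x;p)$ is the union of the family $\cF = \{M \in C(X) \colon x \in M,\ M \ne X,\ M \cap \{p\} = \0\}$. Each member of $\cF$ is a subcontinuum of $X-p$ (avoiding $p$ it is in particular proper), every member contains $x$ so that $\bigcap \cF \supseteq \{x\} \ne \0$, and the union $\bigcup \cF = \kappa(x;p)$ is dense by hypothesis. This is precisely the data certifying $\{p\}$ as a non-block set, whence $p$ is a non-block point.

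To produce a second non-block point, distinct from $p$, I would run the same argument again but anchored at $p$. Since $X$ is coastal it is in particular coastal at $p$, so there is $q \ne p$ with $\kappa(p;q)$ dense; the paragraph above, with $x,p$ replaced by $p,q$, shows $q$ is a non-block point. Because $q \ne p$ these are two distinct non-block points, which is the assertion.

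The argument is essentially definition-chasing, so I do not anticipate any analytic obstacle. The single point requiring care is the distinctness of the two non-block points, and this is exactly why I would re-apply coastality at the first witness $p$ rather than at the original $x$: this choice forces the new witness $q$ to differ from $p$. All of the genuine content has been pushed into the hypothesis of coastality itself -- for metric continua this is supplied by Theorem~\ref{Bing} -- while the implication from coastal to the existence of two non-block points is purely formal.
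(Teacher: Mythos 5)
Your proof is correct. Note that the paper never writes out an argument for this statement: it is prefaced by ``Leonel essentially proved the following'' and left as a citation, with the original argument living in the metric setting (where coastality is supplied by Theorem~\ref{Bing}). Your proposal is the clean abstract version of that argument, and it is exactly the reasoning the paper tacitly expects the reader to supply. In particular, your key observation --- that a coastality witness $p$ for $x$ is automatically a non-block point, because the family $\cF = \{M \in C(X) \colon x \in M,\ M \ne X,\ M \cap \{p\} = \0\}$ certifies $\{p\}$ as a non-block set --- is the same mechanism the paper itself uses inside the proof of Theorem~\ref{irr}, where density of $\kappa(K;y)$ is taken to imply that $y$ is a non-block point (there the certifying family's common part is $K$ rather than a single point $x$). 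Your device for distinctness is also the right one: re-anchoring the argument at $p$ forces the new witness $q$ to satisfy $q \ne p$, whereas a second application at $x$ could a priori return $p$ again; this step is where you genuinely use that $X$ is coastal at \emph{every} point rather than only at $x$. The only pedantic point worth recording is that $\cF$ is nonempty (an empty family has empty, hence non-dense, union), so its intersection really does contain $x$; your write-up implicitly handles this. In short: correct, self-contained, and purely formal, with all the substance residing --- as you say --- in the hypothesis of coastality itself.
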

	
	Bobok, Pyrih and Vejnar \cite{B} have proved each metric continuum is irreducible about its set of non-block points. This strengthens the classic result that metric continua are irreducible about their sets of non-cut points. Theorem \ref{irr} shows how this result can be generalized. Since maps of the following type are ubiquitous later they warrant a name.
	
	\begin{definition} Let $K \subset X$  be a subcontinuum. The canonical quotient map $\theta \colon X \to X/K$  obtained by treating $K$ as a single point is called the $K${\it -bloom}. A map from $X$ is called a {\it bloom} if it is the $K$-bloom for some subcontinuum $K \subset X$. A class $\cP$ of continua is called a {\it bloom class} if it is closed under blooms.\end{definition}
	
	Note that blooms are continuous and monotone. Therefore the image of $X$ under the $K$-bloom is compact and connected. Moreover it follows from normality of $X$ that $X/K$ is Hausdorff, hence a continuum.
	\\
	
	Examples of bloom classes are the class of metric continua, the class of separable continua, and the class of finitely-irreducible continua. We will make frequent use of the following lemma.

	\begin{lemma} \label{pul} Let $K \subset X$ be a subcontinuum. 
		Suppose $X/K$ is coastal at the point $K$. Then $\K(K;y)$ is dense for some $y \notin K$. \end{lemma}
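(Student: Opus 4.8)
The plan is to transport a dense relative composant from $X/K$ back to $X$ along the $K$-bloom $\theta \colon X \to X/K$. Writing $K$ also for the point of $X/K$ onto which the subcontinuum $K$ collapses, the hypothesis supplies a point $q \ne K$ of $X/K$ for which $\K(K;q)$ is dense in $X/K$. Since $\theta$ is injective off $K$ and sends $K$ to a single point, the fibre $\theta^{-1}(q)$ is a single point $y$ with $y \notin K$. I claim this $y$ witnesses the conclusion, so that the whole task is to show $\K(K;y)$ is dense in $X$.

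First I would pull subcontinua back. Recall that $\theta$ is continuous and monotone; being a map from a compact space to a Hausdorff space it is also closed. Hence for any subcontinuum $N \subset X/K$ the preimage $\theta^{-1}(N)$ is a subcontinuum of $X$: it is closed, and monotonicity forces connectedness (if $\theta^{-1}(N) = A \cup B$ were a separation into disjoint nonempty closed sets, then since each fibre of $\theta$ is connected no fibre is split, so $\theta(A)$ and $\theta(B)$ would be disjoint nonempty closed sets separating $N$). Now suppose $N$ is a \emph{proper} subcontinuum of $X/K$ with $K \in N$ and $q \notin N$. Then $M = \theta^{-1}(N)$ is proper (as $\theta$ is onto and $N \ne X/K$), satisfies $K = \theta^{-1}(K) \subset M$, and misses $y$ because $\theta^{-1}(q) = \{y\}$ is disjoint from $\theta^{-1}(N)$. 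Thus $M \subset \K(K;y)$, and taking the union over all such $N$ gives $\theta^{-1}\big(\K(K;q)\big) \subset \K(K;y)$.

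The main obstacle is to show that $\theta^{-1}\big(\K(K;q)\big)$ is dense in $X$. I would prove the general fact that $\theta^{-1}(D)$ is dense in $X$ whenever $D$ is dense in $X/K$ and $K \in D$; note $K \in \K(K;q)$ since the singleton $\{K\}$ is itself a proper subcontinuum containing $K$ and missing $q$. Let $U \subset X$ be nonempty and open. If $U$ meets $K$, then any point of $U \cap K$ lies in $\theta^{-1}(K) \subset \theta^{-1}(D)$, so $U$ meets $\theta^{-1}(D)$. If instead $U \cap K = \0$, then the saturation $\theta^{-1}(\theta(U))$ equals $U$, so $\theta(U)$ is open in $X/K$ by definition of the quotient topology, and density of $D$ gives a point of $U$ mapping into $D$. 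The delicate feature — and the reason the hypothesis $K \in D$ cannot be dropped — is that $K$ may be thick: when $K$ has interior the complement $X - K$ is no longer dense, so density cannot be recovered from $X - K$ alone, and it is exactly the presence of the collapse point $K$ inside $D$ that covers the interior of $K$. Either way $\theta^{-1}(D)$ meets every nonempty open set, hence is dense. Applying this with $D = \K(K;q)$ shows $\K(K;y) \supset \theta^{-1}\big(\K(K;q)\big)$ is dense, which completes the proof.
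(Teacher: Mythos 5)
Your proof is correct and takes essentially the same route as the paper: pull each proper subcontinuum of $X/K$ containing the point $K$ and missing $q$ back through the bloom $\theta$, and observe that the union of these preimages is the preimage of a dense set, hence dense. The only difference is one of detail — you make explicit the two facts the paper leaves implicit (that monotone preimages of subcontinua are subcontinua, and that $\theta^{-1}(D)$ is dense when $D$ is dense and contains the collapse point, which is exactly where the hypothesis $K \in D$ is needed), whereas the paper dispatches the density transfer with a brief appeal to surjectivity.
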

	
	\begin{proof} Suppose $X/K$ is coastal at the point $K$. 
		Then there exists a point $m \ne K$ of $X/K$ and  a family $\cC$ of subcontinua of $X/K$ 
		whose every element contains the point $K$ and is disjoint from $m$, such that $\bigcup \cC$ is dense in $X/K$.
		
		The point $m \in X/K$ is the image of a unique point $y \in X$ under the $K$-bloom $\theta$. 
		Since $\theta$ is monotone we have for each $C \in \cC$ that $\theta^{-1}(C)$ is a subcontinuum of $X$ containing $K$. 
		Moreover $y \notin \theta^{-1}(C)$ as $C$ is disjoint from $m$. 
		Therefore since $\theta$ is surjective $\theta^{-1} \big (\bigcup \cC \big) = \bigcup \big \{\theta^{-1}(C): C \in \cC \big \}$ 
		is a family of subcontinua whose union is dense in $X$ and disjoint from $y$.\end{proof}

	\begin{corollary} \label{pull} Let $K \subset X$ be a subcontinuum with $x \in K$. Suppose $X/K$ is coastal at the point $K$. Then $X$ is coastal at $x$.\end{corollary}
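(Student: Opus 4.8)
The plan is to deduce the conclusion directly from Lemma~\ref{pul} by comparing the two relative composants $\kappa(K;y)$ and $\kappa(x;y)$. First I would invoke Lemma~\ref{pul}: since $X/K$ is coastal at the point $K$, the lemma furnishes a point $y \notin K$ for which $\kappa(K;y)$ is dense in $X$. Because $x \in K$ while $y \notin K$, the points $x$ and $y$ are distinct, so $y$ is an admissible witness for coastality at $x$.

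The key step is the inclusion $\kappa(K;y) \subset \kappa(x;y)$. This follows from the hypothesis $x \in K$: any subcontinuum $M$ with $K \subset M$ automatically satisfies $x \in M$, so every member of the family defining $\kappa(K;y)$ is also a member of the family defining $\kappa(x;y)$, the side conditions $M \ne X$ and $y \notin M$ being identical in both cases. Passing to the unions preserves this inclusion.

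Finally, since any set containing a dense set is itself dense, the density of $\kappa(K;y)$ together with $\kappa(K;y) \subset \kappa(x;y)$ shows that $\kappa(x;y)$ is dense. As $y \ne x$, this is precisely the statement that $X$ is coastal at $x$.

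I do not expect a genuine obstacle here; the only point to verify carefully is that the two defining families differ only in their containment requirement ($K \subset M$ versus $x \in M$), and that replacing $K$ by the single point $x$ \emph{enlarges} rather than shrinks the family — which is guaranteed exactly by the assumption $x \in K$. The monotonicity of the relative composant in its base argument is the crux, elementary though it is.
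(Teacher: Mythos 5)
Your proof is correct and is exactly the intended derivation: the paper states this as an immediate corollary of Lemma~\ref{pul} without writing out the argument, and the missing step is precisely the monotonicity observation $\kappa(K;y) \subset \kappa(x;y)$ (valid since $x \in K$ forces every subcontinuum containing $K$ to contain $x$), together with $y \ne x$ because $y \notin K$. Nothing further is needed.
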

	
	\begin{theorem} \label{irr} Suppose the elements of the bloom class $\cP$ are coastal. Each element of $\cP$ is irreducible about its set of non-block points.\end{theorem}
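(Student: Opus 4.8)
The plan is to unfold the definition of irreducibility and argue by contradiction. Write $N$ for the set of non-block points of a fixed $X \in \cP$; to say $X$ is irreducible about $N$ is exactly to say that no proper subcontinuum of $X$ contains $N$, so I will assume some proper subcontinuum does contain $N$ and derive a contradiction. Before starting I would record that the statement is not vacuous, i.e. that $N \ne \0$: since $X$ is coastal, at any point $x$ there is some $p \ne x$ with $\kappa(x;p)$ dense, and the proper subcontinua witnessing this density all contain $x$ and avoid $p$, so $\{p\}$ is a non-block set and $p \in N$.

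Now suppose for contradiction that a proper subcontinuum $K \subset X$ satisfies $N \subset K$. First I would pass to the quotient via the $K$-bloom $\theta \colon X \to X/K$. Because $K$ is proper, $X/K$ is a nondegenerate Hausdorff continuum (by the remarks following the definition of bloom), and because $\cP$ is a bloom class we have $X/K \in \cP$; by hypothesis $X/K$ is therefore coastal, in particular coastal at its distinguished point $K$. This is precisely the hypothesis of Lemma~\ref{pul}, which I would invoke to produce a point $y \notin K$ for which $\kappa(K;y)$ is dense in $X$.

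The final step is to recognize $y$ as a non-block point and extract the contradiction. The family $\cF = \big\{M \in C(X) \colon K \subset M,\ M \ne X,\ M \cap \{y\} = \0 \big\}$ consists of subcontinua of $X - y$; each contains $K$, so $\bigcap \cF \supset K \ne \0$, while $\bigcup \cF$ is exactly $\kappa(K;y)$, which is dense. Hence $\{y\}$ is a non-block set, so $y \in N$. But $N \subset K$ and $y \notin K$, a contradiction. Therefore no proper subcontinuum of $X$ contains $N$, which is the claimed irreducibility.

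I do not expect a genuine obstacle here, because the real work has already been packaged into Lemma~\ref{pul}: the point $y$ it supplies comes pre-equipped with $y \notin K$, which is exactly the clash we need. The delicate points are thus only bookkeeping — verifying that $X/K$ is a bona fide nondegenerate continuum and that Lemma~\ref{pul} applies verbatim to it. The conceptual heart, and the reason the bloom-class hypothesis is the right one, is that collapsing the hypothetical exceptional continuum $K$ converts its would-be blocking behaviour into coastality of the quotient, which then forces a fresh non-block point to appear outside $K$.
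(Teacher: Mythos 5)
Your proposal is correct and follows essentially the same route as the paper's own proof: collapse the hypothetical proper subcontinuum $K \supset N$ via the $K$-bloom, use the bloom-class hypothesis to conclude $X/K$ is coastal at the point $K$, invoke Lemma~\ref{pul} to obtain $y \notin K$ with $\kappa(K;y)$ dense, and observe that the defining family of $\kappa(K;y)$ exhibits $y$ as a non-block point outside $K$, a contradiction. Your added verifications (nonemptiness of $N$ and nondegeneracy of $X/K$) are details the paper leaves implicit, but the argument is the same.
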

	
	\begin{proof} Suppose $X$ is of class $\cP$. Denote by $B$ the nonempty set of non-block points of $X$ and assume a proper subcontinuum $K$ contains $B$. Choose a point $x \in K$. Since $\cP$ is a bloom class $X/K$ is coastal at the point $K$. 
		By Lemma \ref{pul} there is some $y \notin K$ for whick $\K(K;y)$ is dense in $X$. 
		But since $K \subset \K(K;y)$ this implies $y \notin K$ is a non-block point of $X$, contradicting the assumption that $B \subset K$.  \end{proof}
	
	Once we have shown in Section 1.5 that separable continua are coastal, 
	Theorem \ref{irr} can be employed to show separable continua are irreducible about their set of non-block points.

	\section{Blooms}
	
	\noindent The purpose of this section is to show that in order to prove all continua are coastal we may restrict our attention to indecomposable continua. 
	As an intermediate step we shall first reduce the problem of showing all continua to be coastal to that of showing them to be coastal at their null-aposyndetic points. We first prove some results about null-aposyndetic points for later use.  
	
	\begin{lemma}\label{thick}Each thick subcontinuum $T \subset X$ contains all null-aposyndetic points of $X$.\end{lemma}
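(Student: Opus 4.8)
The plan is to avoid trying to build a subcontinuum that contains the point in its interior — this is hopeless at a genuinely null-aposyndetic point — and instead to pin the null-aposyndetic points down using decompositions. Two observations will drive the argument. First, I would show that if $X = Y \oplus Z$ is \emph{any} decomposition, then every null-aposyndetic point of $X$ lies in $Y \cap Z$. Indeed $Y - Z = Y \cap (X - Z)$ equals $X - Z$, since $Y \cup Z = X$ forces $X - Z \subset Y$; this set is open and contained in $Y$, so $Y - Z \subset Y^\circ$, and symmetrically $Z - Y \subset Z^\circ$. As $Y$ and $Z$ are proper subcontinua, a null-aposyndetic point can lie in neither $Y^\circ$ nor $Z^\circ$; hence it avoids both $Y - Z$ and $Z - Y$ and therefore lands in $Y \cap Z$.

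Second, for the given thick $T$ I would manufacture a decomposition whose overlap sits inside $T$. Write $U = X - T$, which is open and proper since $T$ is proper, and set $W = \overline U = X - T^\circ$; this is a proper closed set (here the hypothesis $T^\circ \ne \0$ is used) satisfying $X = T \cup W$. The crucial point is that every component $C$ of $W$ meets the boundary $\partial T = T \cap \overline{X-T}$. For if $C$ missed $\partial T$ it would lie in the open set $U$, and one checks it would then be a full component of $U$; the boundary bumping theorem ($\S$47, III of \cite{kur2}) gives $\overline C \cap \partial T \ne \0$, and since $C$ is closed (a component of the compact set $W$) this says $C \cap \partial T \ne \0$, a contradiction.

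Now I would split into two cases. If $W$ is connected, then $X = T \oplus W$ is already a decomposition and its overlap is $T \cap W = T - T^\circ = \partial T \subset T$. If $W$ is disconnected, choose a separation $W = W_1 \sqcup W_2$ into nonempty relatively clopen (hence closed) sets and put $Y = T \cup W_1$, $Z = T \cup W_2$. Each $W_i$ is a union of components of $W$, every one of which meets the connected set $T$ by the previous paragraph, so each of $Y, Z$ is connected; they are closed, cover $X$, and since $W_1 \cap W_2 = \0$ their overlap is exactly $T$. Moreover each $W_i$ is relatively open and nonempty in $W = \overline U$, so it meets the dense set $U$, and such a point lies outside the opposite piece, making both $Y$ and $Z$ proper. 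In either case I obtain a decomposition $X = Y \oplus Z$ with $Y \cap Z \subset T$; combining this with the first observation shows that every null-aposyndetic point lies in $Y \cap Z \subset T$.

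I expect the disconnected case of the second step to be the main obstacle. The tempting shortcut — taking the component of $X - T$ that contains a point $x \notin T$ and trying to place $x$ in the interior of its closure together with $T$ — fails precisely because components of an open subset of a non-locally-connected continuum need not be open, so neighbouring components may accumulate at $x$ and $x$ need not be interior to any proper subcontinuum. Routing the construction through a genuine clopen separation of $W$, instead of through a single component of $U$, is what sidesteps this pathology, while the boundary bumping theorem is exactly the tool that keeps the two resulting halves connected.
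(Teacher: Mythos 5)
Your proof is correct and takes essentially the same route as the paper: the same case split on whether $\overline{(X-T)}$ is connected, essentially the same decompositions ($T \oplus \overline{(X-T)}$ in the connected case, a two-piece splitting glued onto $T$ in the disconnected case), and the same concluding observation that a null-aposyndetic point cannot lie in the open complement of either piece of a decomposition. The only cosmetic difference is that the paper separates the open set $X-T$ itself into clopen pieces $A$ and $B$, where the separated-sets form of boundary bumping immediately makes $T \cup A$ and $T \cup B$ subcontinua, whereas you separate the closure $\overline{(X-T)}$ and then invoke the component version of boundary bumping to recover connectedness of $T \cup W_i$ -- a slightly longer but equivalent path.
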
 
	\begin{proof} Suppose $x \in X$ is null-aposyndetic. Since $T$ is thick $\overline {(X-T)}$ is a proper subset of $X$. 
		Suppose first $\overline {(X-T)}$ is connected. Then we have a decomposition $X = $ \mbox{$T \oplus \overline {(X-T)}$}. 
		But $X-T$ is contained in the interior of the proper subcontinuum $\overline {(X-T)}$. Therefore $x \notin X-T$ and as a result $x \in T$.
		
		Now suppose $\overline{(X-T)}$ is disconnected.
		Then $X-T$ is disconnected and there exist nonempty disjoint clopen subsets $A$ and $B$ of $X-T$ 
		such that $A \cup B = X-T$. 
		If $x \notin T$ then without loss of generality $x\in A$. 
		By boundary bumping $T\cup A$ and $T \cup B$ are proper subcontinua 
		and $X = (T \cup A) \oplus (T \cup B)$ is a decomposition. 
		But this implies $x$ is an element of the open subset $X -(T\cup B)$ of the continuum $T \cup A$, contradicting how $x$ is null-aposyndetic.
	\end{proof}
	
	Only indecomposable continua fail to contain a thick subcontinuum. In this case all points are null-aposyndetic.

	\begin{lemma} \label{aab} Each thick subcontinuum $T \subset X$ contains all null-aposyndetic points of $X$ in its boundary.\end{lemma}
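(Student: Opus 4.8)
The plan is to build directly on Lemma \ref{thick}, which already places every null-aposyndetic point of $X$ inside $T$; all that remains is to upgrade ``inside $T$'' to ``inside the boundary $\overline T \cap \overline{(X-T)}$''. In other words, having secured membership in $T$, I only need to rule out membership in the interior $T^\circ$.

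First I would record that $T$, being a subcontinuum of the compact Hausdorff space $X$, is compact and hence closed, so $\overline T = T$. Consequently the boundary of $T$ reduces to $T \cap \overline{(X-T)}$, and using the general identity $X - T^\circ = \overline{(X-T)}$ this is exactly $T - T^\circ$. Thus the lemma is equivalent to the assertion that a null-aposyndetic point $x$ satisfies both $x \in T$ and $x \notin T^\circ$. The first condition is precisely the content of Lemma \ref{thick}, so the task collapses to verifying the second.

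For that second condition I would argue by contradiction, exploiting that thickness makes $T$ a proper subcontinuum. If $x$ were to lie in $T^\circ$, then $x$ would be contained in the interior of the proper subcontinuum $T$; but this is exactly what the definition of a null-aposyndetic point forbids, since no proper subcontinuum may contain such a point in its interior. Hence $x \notin T^\circ$, and combining this with $x \in T$ from Lemma \ref{thick} yields $x \in T - T^\circ = \overline T \cap \overline{(X-T)}$, which is the boundary of $T$, as required.

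I do not expect a genuine obstacle here: the statement is essentially a sharpening of Lemma \ref{thick} read off against the definition of null-aposyndeticity. The only step that warrants a moment's care is the set-theoretic identification of the boundary $\overline T \cap \overline{(X-T)}$ with $T - T^\circ$, which relies on $T$ being closed together with the identity $X - T^\circ = \overline{(X-T)}$.
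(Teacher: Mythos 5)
Your proposal is correct and follows exactly the same route as the paper: invoke Lemma \ref{thick} for $x \in T$, then use the definition of null-aposyndeticity (no proper subcontinuum contains $x$ in its interior, and $T$ is proper since it is thick) to exclude $x \in T^\circ$, hence $x$ lies in the boundary. The only difference is that you spell out the set-theoretic identification of the boundary with $T - T^\circ$ for closed $T$, which the paper leaves implicit.
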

	
	\begin{proof} Suppose $x \in X$ is null-aposyndetic. 
		The previous lemma says $x \in T$. 
		But since $x$ is null-aposyndetic it is not in the interior of any proper subcontinuum.
		Therefore $x$ is an element of the boundary of $T$. \end{proof}
	
	\begin{corollary} \label{voidd} If $x$ is null-aposyndetic each subcontinuum that does not contain $x$ has void interior.\end{corollary}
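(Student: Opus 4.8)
The plan is to argue by contradiction and reduce immediately to Lemma~\ref{thick}, of which this corollary is essentially the contrapositive. Suppose $x$ is null-aposyndetic and that, contrary to the claim, there is a subcontinuum $K \subset X$ with $x \notin K$ yet $K^\circ \ne \0$. The first step is to observe that $K$ must be a proper subcontinuum: since $x$ is a point of $X$ while $x \notin K$, we cannot have $K = X$. Combined with the standing hypothesis $K^\circ \ne \0$, this is precisely the statement that $K$ is thick in the sense of the paper.

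Having identified $K$ as thick, I would then invoke Lemma~\ref{thick}, which asserts that every thick subcontinuum of $X$ contains all null-aposyndetic points of $X$. Applying this to $K$ and to the null-aposyndetic point $x$ forces $x \in K$, in direct contradiction to the assumption $x \notin K$. This contradiction shows no such $K$ can exist, so every subcontinuum omitting $x$ has void interior.

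I do not expect any genuine obstacle in this argument, as the whole content is the logical equivalence between Lemma~\ref{thick} and its contrapositive. The only point requiring a moment's care is the elementary upgrade from ``nonvoid interior'' to ``thick'': one must use $x \notin K$ to rule out $K = X$, thereby confirming that $K$ is proper, which is exactly the extra hypothesis the definition of thick demands before Lemma~\ref{thick} can be applied.
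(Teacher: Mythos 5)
Your proof is correct and is exactly the argument the paper intends: the corollary is stated without proof as an immediate consequence of Lemma~\ref{thick}, namely its contrapositive, and your only added step --- using $x \notin K$ to rule out $K = X$ so that nonvoid interior upgrades to thickness --- is precisely the right observation.
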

	
	We are now ready to prove our first main theorem.

	\begin{theorem} \label{red} Suppose there exists a non-coastal continuum. There also exists a continuum that is not coastal at a null-aposyndetic point.\end{theorem}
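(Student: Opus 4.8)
The plan is to preserve non-coastality while collapsing a carefully chosen subcontinuum so that the image point becomes null-aposyndetic. Preservation is automatic: if $X$ fails to be coastal at $x$ and $x\in K$ for a proper subcontinuum $K$, then the contrapositive of Corollary~\ref{pull} says $X/K$ fails to be coastal at the point $K$. So the entire task reduces to producing a proper subcontinuum $K\ni x$ for which the image point $K$ is null-aposyndetic in $X/K$, and I would produce it by transfinite recursion, building a strictly increasing chain $\{K_\beta\}$ of proper subcontinua each containing $x$. I begin with $K_0=\{x\}$, so that $X/K_0$ is just $X$ and the point $K_0$ is null-aposyndetic exactly when $x$ is; thus if $x$ is already null-aposyndetic the recursion halts at once and $X$ itself is the required continuum.

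At a successor stage, if the point $K_\beta$ is \emph{not} null-aposyndetic in $X/K_\beta$ there is a proper subcontinuum $C\subset X/K_\beta$ containing $K_\beta$ in its interior; I let $K_{\beta+1}=\theta_\beta^{-1}(C)$, where $\theta_\beta\colon X\to X/K_\beta$ is the bloom. Since blooms are monotone, continuous and surjective, $K_{\beta+1}$ is a proper subcontinuum, and continuity gives $K_\beta\subset K_{\beta+1}^\circ$; the inclusion is strict because a proper subcontinuum cannot be open in the connected space $X$. At a limit stage $\lambda$ I set $K_\lambda=\overline{\bigcup_{\beta<\lambda}K_\beta}$, a subcontinuum containing $x$ that is strictly larger than each earlier term. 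Because the chain strictly increases it cannot be indexed by all ordinals, so the recursion must terminate; and since limit stages always succeed, it can only terminate at a stage $\gamma$ where no successor is available, i.e.\ where the point $K_\gamma$ is null-aposyndetic in $X/K_\gamma$. That quotient is then the continuum sought, provided every term $K_\gamma$ is proper.

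The single genuine obstacle is to verify that the limit terms stay proper, and this is exactly where non-coastality is used. Suppose toward a contradiction $K_\lambda=X$, so that $U:=\bigcup_{\beta<\lambda}K_\beta$ is dense; note $U$ is open since each $K_\beta\subset K_{\beta+1}^\circ$. If some point $p\neq x$ lay outside $U$, then every $K_\beta$ would be a proper subcontinuum containing $x$ and missing $p$, whence $\kappa(x;p)\supset U$ would be dense, contradicting that $X$ is not coastal at $x$. Hence $U=X$, so the nested open sets $K_\beta^\circ$ form an open cover of the compact space $X$, forcing a single $K_\gamma^\circ=X$ and hence $K_\gamma=X$, against properness. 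This contradiction shows each $K_\lambda$ is proper and completes the construction. The only steps requiring care are the routine verifications that preimages under blooms behave as claimed, and the bookkeeping confirming that halting occurs precisely at a null-aposyndetic image point.
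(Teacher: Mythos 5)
Your proof is correct, and its skeleton is the same as the paper's: a transfinite nest of proper subcontinua containing $x$, each contained in the interior of the next, with non-coastality used at limit stages exactly as the paper uses it (a dense union would make $\kappa(x;p)$ dense for any $p$ outside it, and a union equal to $X$ is killed by compactness applied to the increasing chain of interiors), followed by collapsing the final continuum and quoting the contrapositive of Corollary~\ref{pull}. Where you genuinely diverge is in how the recursion is steered and stopped. The paper fixes an open base $(D^\alpha)_{\alpha<\xi}$ in advance and only expands at stage $\alpha$ when the expansion can be made to meet $D^\alpha$; this caps the length of the recursion by the weight of $X$, but the price is that null-aposyndeticity of the image point is not automatic and must be verified at the end by a contradiction argument (a proper subcontinuum of $X/M$ containing the point $M$ in its interior would pull back to one containing a basic set $D^\alpha$ disjoint from $M$, contradicting the choice made at stage $\alpha$). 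You instead make ``the image point is not yet null-aposyndetic'' the continuation condition itself, so null-aposyndeticity at the halting stage holds by construction, and termination is secured by the strict-increase invariant together with the set-theoretic fact that a strictly increasing ordinal-indexed chain of subsets of $X$ cannot run through all ordinals. Your version buys a cleaner ending (no base bookkeeping, no final verification, and the limit-stage compactness argument is smoother because your chain never stagnates, whereas the paper's chain can repeat terms and its claim that the interiors cover $X$ needs a little extra care); the paper's version buys an explicit a priori bound, $w(X)$, on the length of the construction, which is in the spirit of the cardinal-invariant refinements pursued later in the paper.
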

	
	\begin{proof}
		
		Suppose $X$ fails to be coastal at $x$. We will construct a proper subcontinuum $M \subset X$. The image of $X$ under the $M$-bloom will fail to be coastal at a null-aposyndetic point.

		Let $(D^\A)_{\A<\xi}$ be an open base for the topology of $X$ where $\xi$ is a cardinal. Let $N_0 = \{ x\}$ and define a nest of proper subcontinua $N_\alpha$ by transfinite recursion as follows: 
		
		Where $\A = \B+ 1$ is a successor ordinal: If some proper subcontinuum $R$ contains $N_\B$ in its interior and meets $D^\A$ then let $N_\A = R$. If no such subcontinuum exists let $N_\A = N_\B$.
		
		Where $\A$ is a limit ordinal: Consider $L_\A = \bigcup _{\B < \A}N_\B$. We cannot have $L_\A = X$ since then the interiors of the sets $N_\B$ would form an increasing chain of proper open subsets with union $X$, and compactness of $X$ forbids this. $L_\A$ cannot be dense as then $\K(x;p)$ would be dense for every $p \notin L_\A$ which would imply $X$ to be coastal at $x$. 
		If some proper subcontinuum $R$ contains $\overline {L_\A}$ in its interior and meets $D^\A$ then let $N_\A = R$. 
		If no such subcontinuum exists let $N_\A =  \overline {L_\A}$.

		The union $N = \bigcup _{\B < \xi}N_\B$ must be proper and non-dense for the same reason each $L_\A$ is proper and non-dense.
		Define the proper subcontinuum $M =  \overline {N}$. Let $\pi\colon X \to X/M$ be the $M$-bloom and $X' = X/M$. Note that $\pi(M) = \pi (x)$ as $x \in M$. It follows from Corollary \ref{pull} that $X'$ fails to be coastal at $\pi(x)$.

		We claim that $X'$ is null-aposyndetic at $\pi(x)$. To prove this suppose there is an open set $U \subset X'$ and proper subcontinuum $K \subset X'$ such that $\pi(x) \in U \subset K$. Since $\pi$ is continuous and monotone we get an open set $\pi^{-1}(U)  \subset X$ and proper subcontinuum $\pi^{-1}(K) \subset X$ such that $M \subset \pi^{-1}(U) \subset \pi^{-1}(K)$. Then $\pi^{-1}(K) - M$ contains a basic open set $D^\A$ disjoint from $M$. Therefore $N_\A$ is disjoint from $D^\A$. Consider what happened at stage $\A$ of our construction. 
		
		Where $\A = \B +1$ is a successor ordinal:  It follows that $N_\A = N_\B$. But this implies no proper subcontinuum contains $N_\B$ in its interior while intersecting $D^\A$. But $\pi^{-1}(K)$ is a proper subcontinuum containing $N_\B$ in its interior, leading to a contradiction.
		
		Where $\A$ is a limit ordinal: It follows that no proper subcontinuum $R$ contains $\overline {L_\A }$ in its interior while intersecting $D^\A$. 
		But $\pi^{-1}(K)$  is a proper subcontinuum containing $\overline {L_\A }$ in its interior, leading to a contradiction.
	\end{proof}
	
	The previous theorem demonstrates how in proving all continua are coastal it is sufficient to look at the null-aposyndetic points. We now proceed to further reduce the problem to examining indecomposable continua.
	
	For the remainder of the section $X$ is a fixed continuum and $x \in X$ a fixed null-aposyndetic point. For $X$ indecomposable we are already done. Hence for convenience we assume $X$ is decomposable and so contains at least one thick subcontinuum. 
	
	
	\begin{lemma} The set of null-aposyndetic points of $X$ is nowhere dense.\end{lemma}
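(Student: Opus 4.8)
The plan is to confine all the null-aposyndetic points of $X$ to the boundary of a single thick subcontinuum, and then invoke the elementary fact that the boundary of a closed set is nowhere dense. Since we are assuming $X$ is decomposable, it contains at least one thick subcontinuum; fix one and call it $T$. By Lemma~\ref{aab} every null-aposyndetic point of $X$ lies in the boundary $\overline T \cap \overline{(X-T)}$ of $T$. Writing $A$ for the set of null-aposyndetic points, we thus obtain the containment $A \subset \partial T$.

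It then remains to check that $\partial T$ is nowhere dense. As a subcontinuum $T$ is closed, so $\partial T = T \cap \overline{(X-T)}$ is itself closed, and it is enough to show it has empty interior. I would argue by contradiction: if some nonempty open $U$ were contained in $\partial T$, then $U \subset T$ would force $U \subset T^\circ$, whereas each point of $T^\circ$ admits a neighbourhood disjoint from $X-T$ and so lies outside $\overline{(X-T)}$. This gives $U \cap \overline{(X-T)} = \0$, contradicting $U \subset \partial T \subset \overline{(X-T)}$.

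Since $\partial T$ is closed and contains $A$, we get $\overline A \subset \partial T$, and as $\partial T$ has empty interior so does $\overline A$; hence $A$ is nowhere dense. The whole argument is short, and there is no real obstacle to speak of: the structural work has already been carried out in Lemma~\ref{thick} and Lemma~\ref{aab}, and the only remaining ingredient is the standard topological observation that the frontier of a closed set cannot contain a nonempty open set. The one place to be slightly careful is that the containment $A \subset \partial T$ is used for a \emph{single} fixed thick subcontinuum, which is precisely what the decomposability hypothesis furnishes.
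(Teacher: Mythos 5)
Your proof is correct and follows exactly the paper's own argument: fix one thick subcontinuum $T$ (available by the standing decomposability assumption), apply Lemma~\ref{aab} to place all null-aposyndetic points in $\partial T$, and conclude since the boundary of a closed set is closed with void interior. The only difference is that you spell out the verification of that last standard fact, which the paper simply asserts.
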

	\begin{proof} Let $T$ be a thick subcontinuum. By Lemma \ref{aab} we know each null-aposyndetic point is contained in the boundary of $T$. But the boundary of $T$ is closed with void interior. Therefore the set of null-aposyndetic points is nowhere dense.  \end{proof}
	
	The next lemma is used in the proof of Theorem \ref{else} which is itself a precursor to Theorem \ref{''}.

	\begin{lemma} \label{family}For each thick subcontinuum $T \subset X$ the point $x$ is in the closure of each component of $X-T$.\end{lemma}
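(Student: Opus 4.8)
The plan is to argue by contradiction. Suppose a component $C$ of $X-T$ has $x\notin\overline C$; I will construct a \emph{proper} subcontinuum of $X$ containing $x$ in its interior, contradicting that $x$ is null-aposyndetic. I record two facts. First, by boundary bumping the closure of every component $C'$ of the open set $X-T$ meets the boundary of $T$, so each $T\cup C'$ is connected; hence for any relatively clopen $W\subseteq X-T$ the set $T\cup W=\bigcup\{T\cup C':C'\subseteq W\}$ is connected (a union of connected sets sharing $T$) and closed (its closure adds only points of the boundary of $T$), so $T\cup W$ is a subcontinuum. Second, $x$ lies in the boundary of $T$ by Lemma \ref{aab}, so $x\in\overline{X-T}$.

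Write $Y=X-T$. The heart of the matter is to produce a relatively clopen $V\subseteq Y$ with $C\subseteq V$ and $x\notin\overline V$. Granting such a $V$, set $U=Y-V$ and $K=T\cup U$. Then $K$ is a subcontinuum by the remark above; it is proper because $K\cap C=\varnothing$ (as $C\subseteq V$ is disjoint from both $T$ and $U$); and $x\in K^\circ$, since $x\notin\overline V$ yields an open $W'\ni x$ with $W'\cap V=\varnothing$, whence $W'\subseteq T\cup U=K$. This is precisely the forbidden subcontinuum, so the proof reduces to finding $V$.

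To find $V$ I pass to the compact Hausdorff space $Z=\overline{X-T}$, where components and quasicomponents coincide. Here $x\in Z$, and $\overline C\subseteq Z$ is a continuum with $x\notin\overline C$; let $\Xi$ be the component of $Z$ containing $x$. If $\overline C$ lies in a different component of $Z$, a clopen-in-$Z$ separation $V_Z\supseteq\overline C$ with $x\notin V_Z$ restricts to $V=V_Z\cap Y$, which is as required. If instead $\overline C\subseteq\Xi$ but some other component $\Xi'$ of $Z$ still meets $Y$, I run the complementary construction: separate $\Xi'$ from $x$ by a clopen-in-$Z$ set $W_Z\supseteq\Xi'$ with $\Xi\cap W_Z=\varnothing$, and take $U'=W_Z\cap Y$; then $U'\ne\varnothing$, $x\notin\overline{U'}$, and $C\cap U'=\varnothing$, so $K=T\cup(Y-U')$ is again proper, connected, and has $x$ in its interior.

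The one case that resists both constructions---and which I expect to be the main obstacle---is when $\Xi=Z$, i.e. $Z$ is connected and $X=T\oplus Z$, while components of $X-T$ ``crowd'' at $x$ even though $\overline C$ avoids $x$; then $x$ lies in the closure of every clopen neighbourhood of $C$ in $Y$, and no single clopen set peels $C$ away from $x$. In this regime I first note that necessarily $x\in\overline{T^\circ}$, for otherwise a neighbourhood of $x$ missing $T^\circ$ would sit inside $X-T^\circ=\overline{X-T}=Z$, making $Z$ a proper subcontinuum with $x$ in its interior. To finish I expect to use the compactness of $Z$ (and of $C(X)$) more delicately: either to produce a finer relatively clopen separation inside $Y$ that the component structure of $Z$ does not detect, or to assemble a proper subcontinuum through $x$ as a limit in $C(X)$ of the continua $T\cup C'$ taken over components $C'$ accumulating at $x$. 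Controlling such a limit against the crowding of the components is where I expect the real difficulty to lie.
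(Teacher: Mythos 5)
Your preliminary machinery is sound: for a relatively clopen $W$ in $Y=X-T$ the set $T\cup W$ is indeed a subcontinuum (each component of $Y$ lies in $W$ or in $Y-W$, boundary bumping attaches each one to $T$, and $\overline W - W\subseteq T$), and your Cases 1 and 2 correctly produce a proper subcontinuum with $x$ in its interior, contradicting null-aposyndeticity. But the proof as a whole has exactly the gap you flag yourself: Case 3, where $Z=\overline{X-T}$ is connected while $X-T$ is not, is never resolved --- what you offer there (``a finer relatively clopen separation,'' ``a limit in $C(X)$ of the continua $T\cup C'$'') is speculation, not an argument. Case 3 is not vacuous: closures of distinct components of $X-T$ can perfectly well be linked to one another through the boundary of $T$, so the lemma is not proved.

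The root of the difficulty is that you are trying to deduce the lemma from null-aposyndeticity alone, whereas the lemma carries a standing hypothesis from the section's setup which the paper's proof uses essentially: $X$ is assumed \emph{not coastal} at $x$ (this is the situation produced by Theorem \ref{red}, and every proof in this stretch of the paper ends by deriving coastality at $x$, ``contrary to assumption''). With that hypothesis the whole lemma takes a few lines and requires no analysis of $Z$ at all. If $X-T$ is connected, then $\overline{(X-T)}$ is a thick subcontinuum, so it contains $x$ by Lemma \ref{thick}. Otherwise let $\cL$ be the family (with more than one element) of components of $X-T$ and suppose $L\in\cL$ has $x\notin\overline L$. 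By Corollary \ref{voidd} the subcontinuum $\overline L$ has void interior, so $L$ is nowhere dense; fix $p\in L$. Each $T\cup R$ with $R\in\cL$, $R\ne L$, is a subcontinuum which is proper (it misses $L$), contains $x$ (since $x\in T$ by Lemma \ref{thick}), and avoids $p$. Hence $\K(x;p)$ contains $\bigcup\{T\cup R\colon R\in\cL, R\ne L\}=X-L$, which is dense, so $X$ is coastal at $x$ --- contradicting the standing assumption. Note that this argument covers your Case 3 instantly, and it is far from clear that your stronger, hypothesis-free version of the statement is even true; nothing in your Cases 1--2 suggests a way to close Case 3 without invoking non-coastality, so you should either find such an argument or (as the paper does) use the assumption that is actually available.
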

	
	\begin{proof} If $X-T$ is connected $\overline {(X-T)}$ is a thick subcontinuum. 
		Then it contains $x$ by Lemma \ref{thick}. 
		Otherwise we can assume the family $\mathcal L$ of components of $X-T$ has more than one element. Also assume for a contradiction some $L \in \mathcal L$ does not have $x$ in its closure. 
		Corollary \ref{voidd} says $\overline {L}$ and hence $L$ have void interior. Fix some $p \in L$.
		
		By boundary bumping $T\cup R$ is a subcontinuum for each $R \in \cL$. Define the family $\cF = \{T \cup R \colon R \in \mathcal L, R \ne L\}$ of subcontinua. Lemma \ref{thick} says $x \in T$. Hence we have $\K(x,p)  \subset \bigcup \cF = X-L$.
		Since $L$ is nowhere dense this shows $\K(x,p)$ is dense. Thus $x$ is coastal contrary to assumption. 
		We conclude that $x \in \overline {L}$ for each component $L$ of $X-T$. \end{proof}
	
	\begin{theorem} \label{else} For each thick subcontinuum $T \subset X$ the set $\overline {(X-T)}$ is a thick subcontinuum.\end{theorem}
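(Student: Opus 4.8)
The plan is to check the three defining features of a thick subcontinuum for $\overline{(X-T)}$ one at a time: that it is proper, that it has nonvoid interior, and --- the only point requiring real work --- that it is connected. The first two I would dispatch immediately. Since $T$ is a subcontinuum it is closed, so $X-T$ is a nonempty open set; hence $\overline{(X-T)}$ contains the nonvoid open set $X-T$ and therefore has nonvoid interior. For properness, observe that the interior $T^\circ$ is nonvoid because $T$ is thick, and each $t \in T^\circ$ has a neighborhood lying inside $T$ and so missing $X-T$; thus $T^\circ$ is disjoint from $\overline{(X-T)}$, which forces $\overline{(X-T)} \subset X - T^\circ \subsetneq X$.

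The substance of the statement is therefore the connectedness of $\overline{(X-T)}$, and here is where I would invoke the preceding lemma. Write $X-T = \bigcup_{L \in \cL} L$, where $\cL$ is the family of components of the open set $X-T$. Each $\overline{L}$ is connected, being the closure of a connected set, and compact, being closed in the compact space $X$; hence each $\overline{L}$ is a subcontinuum. Crucially, Lemma \ref{family} guarantees that the null-aposyndetic point $x$ lies in $\overline{L}$ for \emph{every} component $L \in \cL$.

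With the common point $x$ in hand the conclusion follows quickly. The family $\{\overline{L} : L \in \cL\}$ consists of connected sets all passing through $x$, so their union $Y = \bigcup_{L \in \cL} \overline{L}$ is connected. Since $X - T \subset Y \subset \overline{(X-T)}$, taking closures yields $\overline{Y} = \overline{(X-T)}$, and the closure of a connected set is connected; hence $\overline{(X-T)}$ is connected, completing the verification that it is a thick subcontinuum.

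The main obstacle --- and the only place the hypotheses genuinely do work --- is the appeal to Lemma \ref{family}, which supplies the single point $x$ shared by the closures of all the components. Without such a common point the closures of the various components could fail to link up into a connected whole, so it is exactly the null-aposyndeticity of $x$ (together with the standing assumption that $X$ is decomposable) that rescues the connectedness. Everything else is point-set bookkeeping that I would keep brief.
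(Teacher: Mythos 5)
Your proof is correct and follows essentially the same route as the paper: both arguments hinge on Lemma \ref{family} supplying the null-aposyndetic point $x$ as a common point tied to every component of $X-T$, from which connectedness of $\overline{(X-T)}$ follows; you assemble this as a union of the continua $\overline{L}$ through $x$ and take its closure, while the paper equivalently observes that only one component of $\overline{(X-T)}$ can meet $X-T$. Your explicit verification of properness and nonvoid interior is fine bookkeeping that the paper simply leaves implicit.
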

	
	\begin{proof} By Lemma \ref{family} every component of $\overline {(X-T)}$ meeting $X-T$ must contain $x$. 
		Therefore only one component $L$ of $\overline {(X-T)}$ meets $X-T$ and all other components are contained in the boundary of $T$. 
		But this implies that \mbox{$(X-T) \subset L$}. Since $L$ is connected it follows that $\overline {L} = \overline {(X-T)}$ is also connected.\end{proof}
	
	The next lemma readily follows from $-$ and will be employed in the proof of $-$ the stronger statement of Corollary \ref{interior}: that every thick subcontinuum has connected interior.

	\begin{lemma} \label{Comps} Let $T \subset X$ be a subcontinuum. The components of $T^\circ$ have nonvoid interior.\end{lemma}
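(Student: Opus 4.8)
The plan is to prove the statement by reducing it to the boundary structure already established and then forcing a contradiction with the standing assumption that $X$ is not coastal at $x$. First I would dispose of the trivial cases: if $T$ has void interior then $T^\circ = \0$ has no components, and if $T = X$ then $T^\circ = X$ is the only component and has nonvoid interior. So I may assume $T$ is thick.

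Next I would set $S = \overline{(X-T)}$. By Theorem~\ref{else} this is a thick subcontinuum, and since $X - S = X - \overline{(X-T)} = T^\circ$ we have $S = X - T^\circ$; being thick, $S$ contains $x$ by Corollary~\ref{voidd}. Applying Lemma~\ref{family} to the thick subcontinuum $S$ --- whose complement $X - S$ is exactly $T^\circ$ --- shows that $x$ lies in the closure of every component of $T^\circ$. Thus for each component $C$ the set $\overline{C}$ is a proper subcontinuum containing $x$, and so is $S$ itself.

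With this in hand I would argue by contradiction. Suppose some component $C_0$ of $T^\circ$ has void interior and fix a point $p \in C_0$; since $x \in S = X - T^\circ$ while $p \in T^\circ$ we have $p \ne x$. I claim the family $\cC$ of all proper subcontinua that contain $x$ and avoid $p$ has dense union, which is exactly the assertion that $\K(x;p)$ is dense and hence that $X$ is coastal at $x$ --- the desired contradiction. To see the union is dense, take any nonempty open $W$. If $W$ meets $S$ then $S \in \cC$ already meets $W$. Otherwise $W \subset T^\circ$, and because $C_0$ has void interior $W$ cannot be contained in $C_0$, so $W$ meets some other component $C_1$ at a point $w$; it then suffices to exhibit a single proper subcontinuum through $x$ and $w$ that omits $p$.

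The main obstacle is precisely this last step for those components $C_1$ whose closure happens to contain $p$. When $p \notin \overline{C_1}$ the subcontinuum $\overline{C_1} \ni x, w$ does the job immediately. When $p \in \overline{C_1}$ one must instead build the required subcontinuum by hand: choose a point $s \in \overline{C_1} \cap S$, which exists by boundary bumping applied to the open set $T^\circ$, join $w$ to $s$ inside $\overline{C_1}$ by a subcontinuum avoiding $p$, and then adjoin $S$ to reach $x$. Producing this connecting subcontinuum --- that is, verifying that $p$ does not separate $w$ from $\overline{C_1} \cap S$ within $\overline{C_1}$ --- is the delicate point, and I expect it to be handled either by exploiting the freedom in the choice of $p$ within $C_0$ or by a further boundary-bumping argument inside $\overline{C_1}$. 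Once every such $w$ is shown to lie in $\K(x;p)$, density of $\K(x;p)$ follows and the contradiction is complete, so every component of $T^\circ$ must have nonvoid interior.
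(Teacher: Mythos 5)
Your overall strategy is the same as the paper's: dispose of the trivial cases, set $S = \overline{(X-T)}$ (a thick subcontinuum by Theorem~\ref{else}, containing $x$), pick $p$ in a component $C_0$ of $T^\circ$ with void interior, and derive a contradiction with non-coastalness by showing $\K(x;p)$ is dense. The paper finishes by repeating the argument of Lemma~\ref{family} with the family $\cF = \{R \cup S \colon R \in \cL,\ R \ne C_0\}$: each $R \cup S$ is a subcontinuum by boundary bumping, contains $x$, misses $p$ (both $R$ and $S$ are disjoint from $C_0$), and $\bigcup \cF = X - C_0$ is dense because $C_0$ has void interior. No case analysis over open sets $W$ is needed.

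Your version stalls exactly where you say it does, and the step you leave open is a genuine gap as written --- but the ``delicate case'' $p \in \overline{C_1}$ is in fact vacuous, and noticing this is the one idea missing from your write-up. Since $C_1$ is a component of the open set $T^\circ$, we have $\overline{C_1} \cap T^\circ = C_1$: any point of $\overline{C_1} \cap T^\circ$ together with $C_1$ is a connected subset of $T^\circ$, hence lies in $C_1$ by maximality of components. Consequently $\overline{C_1} - C_1 \subset X - T^\circ = S$, so the only way $p \in C_0 \subset T^\circ$ could lie in $\overline{C_1}$ is if $p \in C_1$, impossible since distinct components are disjoint. Thus $p \notin \overline{C_1}$ always, $\overline{C_1}$ itself is the required subcontinuum through $x$ and $w$ omitting $p$, and neither of your proposed repairs (re-choosing $p$ inside $C_0$, or a further bumping argument inside $\overline{C_1}$) is needed. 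The same observation is what makes the paper's family work: $R \cup S = \overline{R} \cup S$ is closed precisely because $\overline{R} - R \subset S$, and avoids $p$ for free. With that one sentence added, your proof closes up and essentially coincides with the paper's.
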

	\begin{proof}This is clear if $T^\circ = X$ or $T^\circ = \0$. Hence we can assume $T$ is thick.
		Note the relation $X- \overline {(X-T)} = T^\circ$. Therefore the components of $X - \overline {(X-T)}$ are the components of $T^\circ$. 
		
		Let $\mathcal L$ denote the family of components of $T^\circ$. By Theorem \ref{else} $\overline {(X-T)}$ is a subcontinuum. 
		It follows from boundary bumping that $L \cup \overline {(X-T)}$ is a subcontinuum for each $L \in \cL$. 
		If some $L \in \cL$ has void interior then the family $\cF = \{R\cup \overline {(X-T)}  \colon R \in \mathcal L, R \ne L\}$ can be used to repeat the argument of Lemma \ref{family} to show $X$ is coastal at $x$, contrary to our assumptions.
	\end{proof}
	
	Since $X$ is decomposable it has at least one thick subcontinuum $T$. Theorem \ref{else} says $\overline {(X-T)}$ is a second thick subcontinuum.
	It is certainly not the case that $X$ only has two thick subcontinua. For Theorem 5.5 of \cite{nadlerbook} shows how to expand a subcontinuum within a prescribed open set. Obviously expansion preserves the property of being thick. 
	
	However we will show $T$ and $\overline{(X-T)}$ comprise {\it almost all} thick subcontinua of $X$.
	That is to say we cannot expand either to a proper subcontinuum with a strictly larger interior.
	
	\begin{lemma} Suppose $U$ is an open subset of the subcontinuum $T \subset X$ and $x \notin \overline {U}$. Then $X-U$ is disconnected. 
		One component of $X-U$ contains $\overline {(X-T)}$ and all other components are contained in $T^\circ$.\end{lemma}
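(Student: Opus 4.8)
The plan is to read $U$ as a nonempty open subset of $X$ lying inside $T$, so that $U\subseteq T^\circ$ and $X-U$ is closed. The case $T=X$ is vacuous, since then $\overline{(X-T)}=\0$; so I assume $T$ is proper, and because $U$ witnesses $T^\circ\ne\0$ the subcontinuum $T$ is thick, which lets me invoke Theorem~\ref{else}. I would carry out three steps: first locate the component $L$ of $X-U$ that carries $\overline{(X-T)}$; second, check that every \emph{other} component lands in $T^\circ$; and third, show that such another component actually exists, i.e.\ that $X-U$ disconnects. The first two steps are purely structural and never mention $x$; the hypothesis on $x$ is spent entirely on the third.

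For the structural steps, note that $U\subseteq T^\circ$ gives $\overline{(X-T)}=X-T^\circ$ disjoint from $U$, so $\overline{(X-T)}\subseteq X-U$. Theorem~\ref{else} says $\overline{(X-T)}$ is a subcontinuum, in particular connected, so it is swallowed by a single component $L$ of $X-U$; this $L$ is the component named in the statement. If $K$ is any component with $K\ne L$, then disjointness of distinct components forces $K\cap\overline{(X-T)}=\0$, whence $K\subseteq X-\overline{(X-T)}=T^\circ$. This already yields the containment assertion, contingent only on knowing that a second component exists.

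The remaining, and genuinely decisive, step is the disconnectedness, and here I would argue by contradiction using null-aposyndeticity. Suppose $X-U$ were connected. As $U\ne\0$ it is a proper closed connected set, hence a proper subcontinuum. The standard identity $(X-U)^\circ=X-\overline U$ together with the hypothesis $x\notin\overline U$ places $x$ in the interior of $X-U$. Thus $x$ would sit in the interior of a proper subcontinuum, contradicting the null-aposyndeticity of $x$ (no proper subcontinuum contains $x$ in its interior). Therefore $X-U$ is disconnected, and the component $K\ne L$ produced this way is, by the previous paragraph, contained in $T^\circ$.

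I expect the only subtle point to be recognising that the innocuous hypothesis $x\notin\overline U$ is precisely the statement that $x$ lies in the interior of the would-be subcontinuum $X-U$; once this is seen the argument is immediate. As a consistency check I would verify that the degenerate possibility $U=T^\circ$ (which would make $X-U=\overline{(X-T)}$ connected) is silently excluded: since $\overline{(X-T)}$ is a proper subcontinuum with interior $X-\overline{T^\circ}$, null-aposyndeticity forces $x\in\overline{T^\circ}$, so $x\in\overline U$ when $U=T^\circ$, contradicting $x\notin\overline U$. Everything else is bookkeeping with the identities $\overline{(X-T)}=X-T^\circ$ and $(X-U)^\circ=X-\overline U$ and the connectedness supplied by Theorem~\ref{else}.
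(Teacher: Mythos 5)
Your proof is correct and takes essentially the same route as the paper: the paper likewise derives disconnectedness by observing that otherwise $X-U$ would be a proper (thick) subcontinuum whose boundary lies in $\overline{U}$, contradicting null-aposyndeticity of $x$ via Lemma~\ref{aab}, and then uses Theorem~\ref{else} to place the connected set $\overline{(X-T)}$ in a single component of $X-U$, with all remaining components forced into $X-\overline{(X-T)}=T^\circ$. Your only deviation is cosmetic: you invoke the definition of null-aposyndeticity directly (via $(X-U)^\circ = X-\overline{U}$) where the paper routes the same contradiction through the boundary formulation of Lemma~\ref{aab}.
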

	\begin{proof} $X-U$ cannot be connected as then $\overline {(X-U)}$ would be a thick subcontinuum with boundary contained in $\overline {U}$. 
		Since $x \notin \overline {U}$ this contradicts Lemma \ref{aab}.
		
		Since $\overline {(X-T)}$ is connected by Theorem \ref{else} and disjoint from $U$ it is contained in a component of $X-U$. 
		Because $X - \overline {(X-T)} = T^\circ$ each remaining component of $X-U$ is contained in $T^\circ$.
	\end{proof}

	\begin{lemma} \label{Inside} Suppose $S,T \subset X$ are thick subcontinua with $T \subset S$. Then $S^\circ = T^\circ$.\end{lemma}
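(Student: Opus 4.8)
The plan is to argue by contradiction, showing that $S^\circ\neq T^\circ$ would force $X$ to be coastal at $x$, against the standing assumption that $X$ is not coastal at $x$. Since $T\subset S$ gives $T^\circ\subset S^\circ$ at once, and since $S^\circ=T^\circ$ is equivalent to $S^\circ\subset T$ (an open subset of $T$ lies in $T^\circ$), it suffices to prove that $U:=S^\circ-T$ is empty. So I would assume $U\neq\0$ and try to produce a dense composant $\K(x;p)$. The first step is to record the set $W:=T\cup\overline{(X-S)}$. By Theorem~\ref{else} the piece $\overline{(X-S)}$ is a connected subcontinuum, and it meets $T$ in the point $x$ (Lemma~\ref{thick} gives $x\in T$, while Lemma~\ref{aab} gives $x\in\overline{(X-S)}$), so $W$ is a subcontinuum; moreover $W\supset T$ has nonvoid interior while $X-W=U\neq\0$, so $W$ is thick and proper.

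Now fix a point $p\in U$; note $p\neq x$ since $x\in T$. Then $W$ is a proper subcontinuum through $x$ missing $p$, so $W\subset\K(x;p)$ and the whole of $X-U$ is already covered; it remains to cover a dense part of the open set $U$ by proper subcontinua through $x$ that avoid $p$. I would do this in two complementary ways. First, for a point $y$ lying in a component $G$ of the open set $S^\circ=X-\overline{(X-S)}$ other than the component containing $p$, boundary bumping forces $\overline{G}$ to meet $\overline{(X-S)}$, so $\overline{(X-S)}\cup\overline{G}$ is a subcontinuum containing $x$ and $y$; as $p\in S^\circ$ sits in a different component, this subcontinuum misses $p$ and is therefore proper. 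Secondly, for a point $y$ lying in a component $R$ of $X-T$ other than the component containing $p$, the set $T\cup R$ is, by boundary bumping, a proper subcontinuum through $x$ and $y$ that misses $p$. By Lemma~\ref{Comps} the components of $S^\circ$ carry nonvoid interior, so these two families reach a substantial portion of $U$.

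The only points of $U$ left uncovered are those lying both in the component of $p$ in $S^\circ$ and in the component of $p$ in $X-T$; denote this leftover set by $E_p$. The crux of the argument, and the part I expect to be the genuine obstacle, is to arrange that $E_p$ is nowhere dense, for then $\K(x;p)\supset X-E_p$ is dense and $X$ is coastal at $x$, the sought contradiction. I would attempt this by exploiting the freedom in the choice of $p\in U$ together with the two opposing interior estimates already available: by Corollary~\ref{voidd} a subcontinuum avoiding $x$ has void interior, whereas by Lemma~\ref{Comps} the components of $S^\circ$ do not, and balancing these should pin down a point $p$ whose leftover $E_p$ carries no interior. Once such a $p$ is secured, density of $\K(x;p)$ follows, coastality at $x$ contradicts the standing hypothesis, and therefore $U=\0$, giving $S^\circ=T^\circ$.
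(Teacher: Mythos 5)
Your reduction to proving $U := S^\circ - T = \0$, the observation that $W := T \cup \overline{(X-S)}$ is a thick subcontinuum with $X - W = U$, and the bookkeeping showing $\K(x;p) \supset X - E_p$ are all sound (modulo writing $T \cup \overline{R}$ rather than $T \cup R$ so the members are closed). But the step you flag as the obstacle is not merely hard: as proposed it is impossible to complete, so this is a genuine and fatal gap. The very family argument used in Lemma~\ref{Comps} applies with $W$ as the hub, because $W$ is a proper subcontinuum containing $x$ whose complement is the open set $U$: if some component $E$ of $U$ had void interior, then for $p' \in E$ the family $\big\{W \cup \overline{E'} \colon E' \text{ a component of } U,\ E' \ne E \big\}$ would consist of proper subcontinua through $x$ avoiding $p'$ (each $\overline{E'}$ meets $\partial U \subset W$ by boundary bumping, and $\overline{E'} \subset E' \cup W$), and its union contains the dense set $X - E$; that makes $X$ coastal at $x$, contradicting the standing assumption. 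Hence, under your reductio hypothesis $U \ne \0$, \emph{every} component of $U$ has nonvoid interior. Since the component of $U$ containing $p$ is a connected subset of $S^\circ$, of $X - T$, and of $U$ simultaneously, it is contained in $E_p$; therefore every $E_p$ has nonvoid interior, no matter how $p$ is chosen. So no "balancing" of Corollary~\ref{voidd} against Lemma~\ref{Comps} can produce a $p$ with $E_p$ nowhere dense. The deeper issue is that all three of your families are disjoint from $U$, so they can never witness density; the real content of the lemma is the construction of proper subcontinua through $x$ that penetrate $U$ while avoiding a suitable point, and your proposal supplies no mechanism for that.

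For comparison, the paper uses a different device: it fixes an open $U' \subset T^\circ$ with $x \notin \overline{U'}$ and analyses the components of $X - U'$, invoking the lemma preceding Lemma~\ref{Inside} to conclude that every component other than the one containing $\overline{(X-S)}$ lies in $S^\circ$, avoids $x$, and is therefore nowhere dense by Corollary~\ref{voidd}; the family $\{N \cup T \cup R \colon R \ne L\}$ then covers the complement of the component $L$ containing $p$. Note that this argument rests on $p$ landing in one of those nowhere dense components --- precisely the kind of claim your approach lacks --- and that step itself demands scrutiny: $\overline{(X-T)}$ is connected by Theorem~\ref{else}, is disjoint from $U' \subset T^\circ$, and contains both $p$ and $\overline{(X-S)}$, which places $p$ in the large component. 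So you have correctly located the crux of the lemma, but your proposal does not cross it, and the route you sketch for crossing it cannot work.
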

	
	\begin{proof} Suppose otherwise that $T^\circ \ne S^\circ.$ It follows that $S^\circ - T$ is nonempty and open. Then there exists $p \in S^\circ - T$. 
		Choose an open $U\subset T^\circ$ such that $x \notin \overline {U}$. Note also that $U \subset S^\circ$. Let $\mathcal L$ denote the family of components of $X-U$. 
		By the previous lemma $\cL$ has more than one element; some $N \in \cL$ contains $\overline {(X-S)}$; and all $R \ne N$ are contained in $S^\circ$.
		
		Each $R \in \cL$ meets the boundary of $U$. Since $U \subset T$ each $R$ meets $T$ and hence $T \cup R$ is a subcontinuum. Let $L \in \cL$ be such that $p \in L$. Note that $L \not \subset T$. 
		The closure $\overline {L}$ is formed by adding to $L$ a subset of $\overline {U}$. 
		Since $x \notin \overline {U}$ it follows from lemma \ref{voidd} that $\overline {L}$ has void interior. Hence $L$ has void interior.
		
		The family of subcontinua $\{ N \cup T \cup R \colon R \ne L\}$ contains $X-L$ in the union and shows $\K (x;p)$ is dense, contrary to our assumptions.
	\end{proof}
	
	\begin{theorem} \label{together} Suppose $S$ and $T$ are thick subcontinua. Either $S\cup T = X$ or $S^\circ = T^\circ$.\end{theorem}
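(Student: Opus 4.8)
The plan is to prove the disjunction directly: assume the first alternative fails, i.e. $S \cup T \ne X$, and derive $S^\circ = T^\circ$. The strategy is to recognize $S \cup T$ itself as a thick subcontinuum that sandwiches both $S$ and $T$, and then to invoke Lemma \ref{Inside} twice.

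First I would establish that $S$ and $T$ must meet. Since both are thick, Lemma \ref{thick} tells us each contains every null-aposyndetic point of $X$; in particular each contains the fixed null-aposyndetic point $x$. Hence $x \in S \cap T$, so the intersection is nonempty. As $S$ and $T$ are closed connected sets sharing the point $x$, their union $S \cup T$ is closed and connected, hence again a subcontinuum.

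Next, under the standing assumption $S \cup T \ne X$, this union is a \emph{proper} subcontinuum. It inherits nonvoid interior from $S$, since $S^\circ \subset (S \cup T)^\circ$, so $S \cup T$ is thick. Now $S \subset S \cup T$ and $T \subset S \cup T$, with all three sets thick, so two applications of Lemma \ref{Inside} give $S^\circ = (S \cup T)^\circ$ and $T^\circ = (S \cup T)^\circ$; combining these yields $S^\circ = T^\circ$, as required.

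The only genuine subtlety is the opening step — guaranteeing $S \cap T \ne \0$ so that $S \cup T$ is connected — which is precisely where the standing hypothesis that $x$ is null-aposyndetic earns its keep through Lemma \ref{thick}. Once $S \cup T$ is known to be a thick subcontinuum, the conclusion follows immediately from Lemma \ref{Inside}, and I anticipate no further obstacle beyond observing that $S \cup T$ remains proper, which is exactly the case assumption being treated.
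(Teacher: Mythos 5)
Your proof is correct and takes essentially the same approach as the paper: both arguments form the union $S \cup T$, use the standing null-aposyndetic point $x$ (via Lemma \ref{thick}, or equivalently Lemma \ref{aab}) to conclude that this union is a subcontinuum, and then finish with Lemma \ref{Inside}. The only difference is the logical arrangement --- the paper assumes $S^\circ \ne T^\circ$ and applies Lemma \ref{Inside} once, contrapositively, to force $S \cup T = X$, whereas you assume $S \cup T \ne X$ and apply Lemma \ref{Inside} twice directly; this is a cosmetic (and arguably slightly cleaner) rearrangement of the same idea.
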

	
	\begin{proof} If $S^\circ \ne T^\circ$ then without loss of generality $S^\circ - T$ is nonempty and open. $S \cup T$ is a subcontinuum since $T$ and $S$ meet at $x$ by Lemma \ref{aab}.  Then $T \subset S \cup T$ but the interior of $S\cup T$ is strictly larger than the interior of $T$. By the Lemma \ref{Inside} we know $S \cup T$ cannot be thick. Therefore $S \cup T = X$. \end{proof}
	
	As mentioned earlier we can combine Theorem \ref{together} with Lemma \ref{Comps} to get the following.
	
	\begin{corollary} \label{interior} Every thick subcontinuum has connected interior.\end{corollary}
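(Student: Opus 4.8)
The plan is to argue by contradiction. Fix a thick subcontinuum $T$ and suppose its interior $T^\circ$ is disconnected. I would first recall the two structural facts already available: from the identity $T^\circ = X - \overline{(X-T)}$ together with Theorem~\ref{else}, the set $\overline{(X-T)}$ is itself a thick subcontinuum, and the components of $T^\circ$ coincide with the components of $X - \overline{(X-T)}$. The strategy is to manufacture, from a single component $L$ of $T^\circ$, a thick subcontinuum whose interior strictly dominates that of $\overline{(X-T)}$, and then to invoke Theorem~\ref{together} to reach a contradiction.

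Concretely, I would choose a component $L$ of $T^\circ$ and set $S = \overline{L} \cup \overline{(X-T)}$. The first task is to check $S$ is a subcontinuum: by boundary bumping the closure of the component $L$ of the open set $T^\circ$ meets the boundary of $T^\circ$, which lies in $\overline{(X-T)}$, so $S$ is a union of two continua sharing a point. The second task is to check $S$ is thick. Lemma~\ref{Comps} gives $L^\circ \ne \0$, so $S$ has nonvoid interior; and since $T^\circ$ is disconnected there is a second component $L'$, again with $(L')^\circ \ne \0$ by Lemma~\ref{Comps}, and this nonempty open set is disjoint from $S$, so $S \ne X$.

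The decisive step is then to apply Theorem~\ref{together} to the two thick subcontinua $S$ and $\overline{(X-T)}$. Because $\overline{(X-T)} \subset S$ we have $S \cup \overline{(X-T)} = S \ne X$, so the first alternative of Theorem~\ref{together} is impossible and we are forced into $S^\circ = \overline{(X-T)}^\circ$. But $L^\circ$ is a nonempty open subset of $S$, hence of $S^\circ$, so $L^\circ \subset \overline{(X-T)}^\circ \subset \overline{(X-T)}$, contradicting $L^\circ \subset T^\circ = X - \overline{(X-T)}$. This contradiction forces $T^\circ$ to be connected.

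I expect the delicate points to be bookkeeping rather than conceptual. The part requiring genuine care is the separation of components in the absence of local connectedness: to see $(L')^\circ$ and $L^\circ$ are disjoint from the relevant closures I would use that a component of $T^\circ$ is relatively closed, so $\overline{L} \cap T^\circ = L$, whence a point of $L'$ cannot lie in $\overline{L}$; disjointness from $\overline{(X-T)}$ is immediate from $T^\circ \cap \overline{(X-T)} = \0$. Establishing that $S$ is proper --- that is, exhibiting a nonempty open set outside it --- is precisely where the hypothesis that $T^\circ$ is disconnected and Lemma~\ref{Comps} enter, and is the one place where the argument would collapse were either unavailable.
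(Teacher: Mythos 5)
Your proof is correct and is essentially the paper's own argument: the paper derives this corollary precisely by "combining Theorem~\ref{together} with Lemma~\ref{Comps}" (via Theorem~\ref{else} and boundary bumping), which is exactly what you flesh out, your comparison of $S$ with $\overline{(X-T)}\subset S$ being the containment case of Theorem~\ref{together}, i.e.\ Lemma~\ref{Inside}. The delicate points you flag (relative closedness of components, properness of $S$ via a second component with nonvoid interior) are handled correctly.
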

	
	We are now ready to prove the result mentioned in the section's preamble.
	
	\begin{theorem} \label{split} $X = S\oplus T$ where $S$ and $T$ are indecomposable and $S \cap T$ is nowhere dense.  \end{theorem}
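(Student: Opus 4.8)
The plan is to obtain the decomposition by applying the operation $K \mapsto \overline{(X-K)}$ twice, which manufactures a pair of subcontinua whose interiors are dense in them, and then to show that this density forces indecomposability. First I would fix a thick subcontinuum $T_0$ (one exists since $X$ is decomposable) and set $S = \overline{(X-T_0)}$ and $T = \overline{(X-S)}$. By Theorem \ref{else} both $S$ and $T$ are thick subcontinua; since $X-S = T_0^\circ$ we have $T = \overline{T_0^\circ}$, and Corollary \ref{interior} tells us $T_0^\circ$ is connected, consistent with $T$ being a continuum. I would then verify the identities $S^\circ = X-T$ and $T^\circ = X-S$. From $S \cup T \supset (X-T)\cup T = X$ and the properness of both sets one gets $X = S \oplus T$. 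Finally $S\cap T = T - T^\circ$ is the boundary of $T$; any open set contained in it would lie in $T$ and hence in $T^\circ$, which is impossible, so $S\cap T$ is closed with empty interior, that is, nowhere dense.

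The heart of the argument is to prove $S$ and $T$ indecomposable. Both have the form $\overline{(X-K)}$ for a thick $K$, so each has interior dense in it: for $S$, one checks $X-T_0 \subset S^\circ \subset S = \overline{(X-T_0)}$, whence $\overline{S^\circ} = S$, and symmetrically for $T$. Thus it suffices to establish the general claim that \emph{a thick subcontinuum $K$ whose interior is dense in it is indecomposable.} Suppose instead $K = A \oplus B$. I would first argue that $K^\circ \subset A$ or $K^\circ \subset B$: if neither held then $K^\circ - A$ and $K^\circ - B$ would both be nonempty open subsets of $X$ contained in $B$ and $A$ respectively, forcing $A$ and $B$ to be thick, and then Lemma \ref{Inside} (applied to $A \subset K$ and to $B \subset K$) would give $A^\circ = K^\circ = B^\circ$, so $K^\circ \subset A$, a contradiction. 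Hence say $K^\circ \subset B$. Then $K - B$ is nonempty (as $B$ is proper in $K$), relatively open in $K$, and contained in $K - K^\circ$, the boundary of $K$. This relatively open subset of the boundary is disjoint from $K^\circ$, contradicting the density of $K^\circ$ in $K$. Applying the claim to $K = S$ and $K = T$ finishes the proof.

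The main obstacle is precisely this indecomposability claim, and within it the dichotomy $K^\circ \subset A$ or $K^\circ \subset B$; the delicate situation is when both $A$ and $B$ meet $K^\circ$ in an open set, which is exactly where the connectedness of $K^\circ$ from Corollary \ref{interior} and the rigidity provided by Lemma \ref{Inside} are indispensable. Equally important is the opening move of replacing an arbitrary thick $T_0$ by the regular subcontinua $S = \overline{(X-T_0)}$ and $T = \overline{(X-S)}$: without the density of the interior a thick subcontinuum may well be decomposable, differing from $\overline{T^\circ}$ by a relatively open piece of its nowhere dense boundary, and it is exactly such a piece that a decomposition would produce. Throughout I would confirm that the cited results, established for the fixed null-aposyndetic non-coastal point $x$, apply verbatim to the thick subcontinua $S$ and $T$.
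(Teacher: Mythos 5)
Your proposal is correct and takes essentially the same route as the paper: the paper also sets $S = \overline{(X-T)}$ and passes to closures of interiors (your double-complement construction produces exactly these regular-closed sets), gets nowhere-density of $S \cap T$ from disjointness of the interiors, and derives indecomposability from Lemma \ref{Inside} combined with regular-closedness. The only difference is in the last verification — the paper shows no proper subcontinuum of $T$ has interior in $T$ and invokes the stated equivalence with indecomposability, while you refute a decomposition $T = A \oplus B$ directly via your dichotomy — but both hinge on the same application of Lemma \ref{Inside}.
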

	
	\begin{proof} Let $T$ be a thick subcontinuum and $S = \overline {(X-T)}$. By Corollary \ref{interior} and Theorem \ref{together} we can replace each of $S$ and $T$ with the closure of the interior. 
		Hence we can assume $T = \overline {(T^\circ)}$ and $S = \overline {(S^\circ)}$ and still have $X = S \cup T$. Since $S^\circ$ and $T^\circ$ are disjoint the intersection $S \cap T$ is nowhere dense. We claim each of $S$ and $T$ is indecomposable under the subspace topology. 
		
		To prove this suppose some subcontinuum $K \subset T$ has interior in $T$. Then there exists an open set $U$ of $X$ such that $T \cap U \subset K$. 
		Since $T = \overline {(T^\circ)}$ the open set $U$ must intersect $T^\circ$. Then $U \cap T^\circ \subset K$ and therefore $K$ is thick in $X$.
		
		It follows from Lemma \ref{Inside} that $K^\circ = T^\circ$. 
		But since we assumed $K \subset T$ and $T = \overline {(T^\circ)}$ this guarantees $K = T$. 
		We conclude that no proper subcontinuum of $T$ has interior in $T$. 
		Equivalently $T$ is indecomposable under the subspace topology. 
		The proof is identical for $S$.
	\end{proof}
	
	The next lemma will allow us to obtain an indecomposable quotient by treating either of the two indecomposable continua $S$ or $T$ as a single point.
	
	\begin{lemma} \label{phi}Suppose $X = S \oplus T$ where $S$ and $T$ are indecomposable. The image of $X$ under the $S$-bloom is indecomposable.\end{lemma}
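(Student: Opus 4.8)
The plan is to argue by contradiction, using the equivalence between indecomposability and the absence of thick subcontinua together with Lemma \ref{Inside}. Write $\theta \colon X \to X/S$ for the $S$-bloom and $s_0 = \theta(S)$ for the collapsed point, so that $\theta^{-1}(s_0) = S$. Since $X = S \cup T$ with $S \cap T \ne \emptyset$ (the two pieces of a decomposition must meet, lest $X$ be disconnected), the image satisfies $X/S = \theta(S) \cup \theta(T) = \theta(T)$; this identity is what will eventually contradict properness.

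Suppose toward a contradiction that $X/S = A \oplus B$. As $\theta$ is monotone the preimages $A' = \theta^{-1}(A)$ and $B' = \theta^{-1}(B)$ are subcontinua of $X$, and they are proper because $\theta$ is surjective; since $A' \cup B' = \theta^{-1}(A \cup B) = X$, this exhibits a decomposition $X = A' \oplus B'$. In any such decomposition both pieces are thick: $X - A'$ is nonempty and open and lies in $B'$, so $B'$ has nonvoid interior, and symmetrically for $A'$. The collapsed point satisfies $s_0 \in A \cup B$, and I treat the two ways this can happen. Note that $s_0 \in A$ forces $S = \theta^{-1}(s_0) \subset A'$, while $s_0 \notin B$ forces $B' \cap S = \emptyset$.

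First suppose $s_0 \in A \cap B$, so $S$ is contained in both $A'$ and $B'$. Applying Lemma \ref{Inside} to the nested thick pairs $S \subset A'$ and $S \subset B'$ gives $(A')^\circ = S^\circ = (B')^\circ$. Now $X - A'$ is a nonempty open subset of $B'$, hence contained in $(B')^\circ = S^\circ \subset S$; this forces $X - S \subset A'$, and together with $S \subset A'$ yields $A' = X$, contradicting properness. Otherwise $s_0$ lies in exactly one piece, say $s_0 \in A$ and $s_0 \notin B$. Then $B'$ misses $S$, so $B' \subset X - S \subset T$. Being thick, $B'$ has nonvoid interior in $X$ and hence in the subspace $T$; were $B'$ a proper subcontinuum of $T$ it would be thick in $T$, contradicting the indecomposability of $T$. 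Thus $B' = T$, whence $B = \theta(B') = \theta(T) = X/S$, again contradicting properness. Since every case is impossible, $X/S$ admits no decomposition and is indecomposable.

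I expect the main obstacle to be the correct bookkeeping of where the collapsed point $s_0$ sits and the translation of the incidence relations $S \subset A'$ and $B' \cap S = \emptyset$ into statements about interiors. The delicate step is the case $s_0 \in A \cap B$, where Lemma \ref{Inside} is essential to force the common interior $S^\circ$ and thereby collapse one piece onto all of $X$; the remaining case reduces cleanly to the indecomposability of $T$ once $B'$ is seen to land inside $T$.
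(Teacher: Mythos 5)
Your proof is correct, and it is correct in the same qualified sense as the paper's: both arguments live under the section's standing hypotheses (a fixed null-aposyndetic point $x$ at which $X$ fails to be coastal), since Lemma \ref{Inside} is only established under those assumptions. One small omission: before invoking Lemma \ref{Inside} on the pairs $S \subset A'$ and $S \subset B'$ you should record that $S$ itself is thick; this is immediate, since $S$ is proper and contains the nonempty open set $X - T$.

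The routing differs from the paper's in a genuine way. The paper refutes a thick subcontinuum $K \subset X/S$ rather than a decomposition: it pulls $K$ back to the thick subcontinuum $\varphi^{-1}(K)$, uses Lemma \ref{thick} to conclude that the null-aposyndetic point $x$ --- and hence all of $S$ --- lies inside $\varphi^{-1}(K)$, then produces a nonempty open subset of $\varphi^{-1}(K)$ disjoint from $S$ and contradicts Theorem \ref{together}. Because Lemma \ref{thick} places the collapsed point inside every thick subcontinuum automatically, the paper needs no case analysis and, notably, never uses the indecomposability of $S$ or $T$ --- only that they form a decomposition and hence are both thick. Your proof replaces that appeal to null-aposyndeticity by tracking where the collapsed point $s_0$ sits: the case $s_0 \in A \cap B$ is handled by Lemma \ref{Inside}, and the case where $s_0$ misses one piece is handled by the indecomposability of $T$. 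So your argument genuinely consumes the hypothesis that $T$ is indecomposable where the paper's does not, while the paper's argument is uniform and in fact proves the formally stronger claim that the $S$-bloom of any decomposition is indecomposable under the standing assumptions. Both proofs ultimately rest on the same rigidity mechanism (Lemma \ref{Inside}, equivalently Theorem \ref{together}), so the difference is one of bookkeeping strategy rather than of underlying idea.
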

	\begin{proof}
		Since $T$ contains the open set $X-S$ it is thick. Likewise $S$ is thick. Therefore $x \in S \cap T$. Let $\varphi \colon X \to X/S$ be the $S$-bloom. We claim $X/S$ is indecomposable. To prove this suppose $K \subset X/S$ is a thick subcontinuum. Since blooms are monotone $\varphi^{-1}(K)$ is a thick subcontinuum of $X$. By Lemma \ref{thick} the null-aposyndetic point $x \in \varphi^{-1}(K)$. It follows that $\varphi(x) \in K$.

		Since $K$ has interior in $X/S$ there exists an open set $U$ of $X/S$ such that $\varphi(x) \notin U \subset K$. Then $x \notin \varphi^{-1}(U) \subset \varphi^{-1}(K)$. In particular $\varphi^{-1}(U)$ is disjoint from $S$. It follows that the interior of $\varphi^{-1}(K)$ is strictly larger than the interior of $S$. But then $\varphi^{-1}(K)$ is a subcontinuum of $X$ that contains $S$ and whose interior strictly contains the interior of $S$. Then theorem \ref{together} implies $\varphi^{-1}(K) = X$. This in turn implies $K = X/S$ contradicting the assumption that $K$ is thick. \end{proof}
	
	\begin{theorem} \label{''} If there exists a non-coastal continuum, there exists a non-coastal indecomposable continuum.\end{theorem}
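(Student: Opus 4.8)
The plan is to assemble the machinery already developed in this section into a single chain of implications. Suppose a non-coastal continuum exists. Then Theorem~\ref{red} furnishes a continuum that fails to be coastal at some null-aposyndetic point, and this is precisely the pair $(X,x)$ fixed for the remainder of the section. If this $X$ is indecomposable then $X$ is itself a non-coastal indecomposable continuum and there is nothing more to prove, so I would assume---exactly as the section does---that $X$ is decomposable. The goal is then to manufacture an indecomposable quotient that inherits the failure of coastality.

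To do this I would apply Theorem~\ref{split}, which decomposes $X = S \oplus T$ with both $S$ and $T$ indecomposable. Because $S$ and $T$ are proper and respectively contain the nonempty open sets $X-T$ and $X-S$, both are thick, so Lemma~\ref{thick} forces the null-aposyndetic point $x$ into $S \cap T$; in particular $x \in S$. Lemma~\ref{phi} now guarantees that the image $X/S$ of $X$ under the $S$-bloom is indecomposable. It therefore only remains to check that $X/S$ is non-coastal.

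For the final step I would invoke Corollary~\ref{pull} contrapositively. That corollary asserts that whenever $x \in K$ and $X/K$ is coastal at the point $K$, the continuum $X$ is coastal at $x$. Applying it with $K = S$, and using that $x \in S$ together with the fact that $X$ is not coastal at $x$, we conclude that $X/S$ fails to be coastal at the point $S$. Hence $X/S$ is non-coastal, and by Lemma~\ref{phi} it is indecomposable, which is exactly the continuum sought.

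I do not expect a genuine obstacle here, since every substantive ingredient---the reduction to a null-aposyndetic point in Theorem~\ref{red}, the decomposition into indecomposable pieces in Theorem~\ref{split}, and the indecomposability of the bloom in Lemma~\ref{phi}---has already been established. The only care needed is bookkeeping: confirming that the continuum produced by Theorem~\ref{red} really is the standing $X$ whose section-wide hypotheses (null-aposyndeticity of $x$ and failure of coastality at $x$) license the later results, and verifying $x \in S$ so that Corollary~\ref{pull} applies. The one conceptual point worth stating explicitly is that non-coastality is witnessed at the single point $x$, so it suffices to transport that pointwise failure through the bloom to the collapsed point $S$.
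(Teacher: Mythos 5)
Your proposal is correct and follows essentially the same route as the paper: reduce via Theorem~\ref{red} to a continuum non-coastal at a null-aposyndetic point, split it as $S \oplus T$ with both pieces indecomposable, use Lemma~\ref{phi} to see the $S$-bloom image is indecomposable, and transport the failure of coastality through Corollary~\ref{pull} (contrapositively) using $x \in S$. The only cosmetic difference is that the paper keeps the original continuum in view and records the composite $\varphi \circ \pi$ as a single $K$-bloom with $K = \pi^{-1}(S) \cup M$ --- bookkeeping that is inessential here but is reused later for Theorem~\ref{reduce} --- whereas you simply relabel the output of Theorem~\ref{red} as the standing $(X,x)$ and quotient once, which suffices for this statement.
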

	\begin{proof} Suppose $X$ fails to be coastal at $x$. Let $\pi \colon X \to X/M$ be the bloom constructed in Theorem \ref{red}. If $X/M$  is indecomposable let $\varphi$ be the identity mapping on $X/M$ and $S = \{M\}$.
		
		Otherwise $X/M$ is decomposable and fails to be coastal at the null-aposyndetic point $\pi(x)$. In this case write $X/M = S \oplus T$ as the union of two indecomposable subcontinua and let $\varphi$ be the $S$-bloom constructed in Lemma \ref{phi}. 
		
		In both cases the continuum $\varphi \big (\pi (X)\big ) = (X/M)/S$ is indecomposable. Let $\theta =  \varphi \circ \pi$. It follows that $\theta$ is the $K$-bloom for $K = (\pi^{-1}(S) \cup M)$. Corollary \ref{pull} then implies that $\theta(X)$ fails to be coastal at $\theta(x)$. 
	\end{proof}
	
	\begin{corollary} \label{nix} If all indecomposable continua are coastal, all continua are coastal.\end{corollary}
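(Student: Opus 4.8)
The plan is to recognize that Corollary~\ref{nix} is nothing more than the contrapositive of Theorem~\ref{''}, and to argue accordingly. Theorem~\ref{''} asserts that the existence of any non-coastal continuum forces the existence of a non-coastal indecomposable continuum. The corollary is the assertion that if no non-coastal indecomposable continuum exists, then no non-coastal continuum exists at all. These two implications are logically equivalent, so the proof should be a short contradiction argument invoking Theorem~\ref{''} as a black box.

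Concretely, I would assume for contradiction that the hypothesis holds but the conclusion fails: suppose every indecomposable continuum is coastal, yet some continuum $X$ is \emph{not} coastal. Applying Theorem~\ref{''} to $X$ yields an indecomposable continuum that fails to be coastal. This directly contradicts the standing assumption that all indecomposable continua are coastal. Hence no non-coastal continuum can exist, and every continuum is coastal.

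I do not expect any genuine obstacle here, since all the substantive work — the two successive reductions carried out in Theorem~\ref{red} (reducing to null-aposyndetic points) and Theorem~\ref{''} (reducing further to the indecomposable case via the blooms of Lemma~\ref{phi} and Theorem~\ref{split}) — has already been completed. The only point worth stating carefully is that Theorem~\ref{''} is phrased in existential-to-existential form, so the corollary follows purely by contraposition; no new construction, quotient, or topological argument is required. The proof is therefore a single clean deduction.

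\begin{proof} This is the contrapositive of Theorem~\ref{''}. Suppose every indecomposable continuum is coastal and, seeking a contradiction, that some continuum fails to be coastal. Then Theorem~\ref{''} supplies a non-coastal indecomposable continuum, contradicting our assumption. Hence every continuum is coastal. \end{proof}
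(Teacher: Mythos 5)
Your proof is correct and matches the paper's intent exactly: the paper states Corollary~\ref{nix} without proof precisely because it is the immediate contrapositive of Theorem~\ref{''}, which is the argument you give. Nothing further is needed.
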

	
	Corollary \ref{nix} can be strengthened by letting $X$ be an element of some bloom class.
	
	\begin{theorem} \label{reduce} Suppose $\mathcal P$ is a bloom class. If all indecomposable continua of type $\mathcal P$ are coastal, all continua of type $\mathcal P$ are coastal. \end{theorem}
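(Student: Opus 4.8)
The plan is to prove the contrapositive, running the argument of Theorem \ref{''} essentially verbatim but keeping careful track of the bloom class throughout. So I would suppose that some continuum $X$ of type $\cP$ fails to be coastal, and aim to produce an \emph{indecomposable} continuum of type $\cP$ that fails to be coastal; this contradicts the hypothesis and establishes the theorem.

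First I would invoke the construction of Theorem \ref{''} unchanged. Starting from a point $x$ at which $X$ is non-coastal, that proof builds the map $\pi \colon X \to X/M$ of Theorem \ref{red}, then (in the decomposable case) the $S$-bloom $\varphi$ of Lemma \ref{phi}, and shows that the composite $\theta = \varphi \circ \pi$ sends $X$ onto the indecomposable continuum $(X/M)/S$ while failing to be coastal at $\theta(x)$. The single fact that drives the present theorem is already recorded in that proof: $\theta$ is the $K$-bloom of $X$ for $K = \pi^{-1}(S) \cup M$. In other words, the entire reduction to the indecomposable case is realized by \emph{one} bloom applied to $X$, not by an abstract iterated quotient.

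With this in hand the conclusion is immediate. Since $\cP$ is a bloom class it is closed under blooms, so $\theta(X)$, being a bloom of the class-$\cP$ continuum $X$, again lies in $\cP$. Thus $\theta(X)$ is a non-coastal indecomposable continuum of type $\cP$, contradicting the hypothesis that every indecomposable continuum of type $\cP$ is coastal. This completes the contrapositive, and hence the theorem.

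I do not expect a genuine obstacle here: the work was front-loaded into Theorem \ref{''}, whose proof was arranged precisely so that the reduction is a single bloom. The only point requiring care -- and it is verified inside the proof of Theorem \ref{''} -- is that the composite $\varphi \circ \pi$ of two blooms is itself the $K$-bloom for $K = \pi^{-1}(S) \cup M$, so that mere closure of $\cP$ under single blooms suffices. Were one to phrase the reduction as ``collapse $M$, then collapse $S$'', one would instead need $\cP$ closed under iterated blooms; recognizing $\theta$ as one bloom is exactly what lets the plain bloom-class hypothesis carry the argument.
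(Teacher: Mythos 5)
Your proof is correct and takes essentially the same route as the paper: the paper likewise forms $\theta = \varphi \circ \pi$ from the blooms of Theorems \ref{red} and \ref{phi}, records that $\theta$ is the single $K$-bloom for $K = \pi^{-1}(S) \cup M$, and combines closure of $\cP$ under blooms with the coastal transfer of Corollary \ref{pull}. The only difference is cosmetic -- you argue by contrapositive while the paper states the argument directly for each $X \in \cP$ and $x \in X$; since the constructions of $M$ and $S$ presuppose non-coastalness at $x$, the two framings amount to the same thing (yours is arguably the cleaner reading).
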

	
	\begin{proof} Let $X$ be a member of the bloom class $\cP$ and $x \in X$. Let the blooms $\pi\colon X \to X/M$ and $\varphi \colon (X/M) \to (X/M)/S$ be as defined in the proofs of Theorems \ref{red} and \ref{phi} respectively. It follows that $\theta =  \varphi \circ \pi$ is the $K$-bloom for $K = (\pi^{-1}(S) \cup M)$.
		
		From the proof of Theorem \ref{''} we know $\theta(X)$ is nondegenerate and indecomposable. Since $\cP$ is a bloom class $\theta(X) \in \cP$. By assumption $\theta(X)$ is coastal at $\theta(x)$. Then by Corollary \ref{pull} we have that $X$ is coastal at $x$.
	\end{proof}

	Theorem \ref{reduce} provides an alternate proof of Bing's Theorem \ref{Bing} in the metric case: Let $\mathcal P$ denote the class of metric continua. 
	Observe that if a metric space is mapped continuously onto a Hausdorff space, the image is metrizable. From this we conclude that $\cP$ is closed under blooms. Therefore by Theorem \ref{reduce} we may restrict our attention to indecomposable metric continua. Recall an indecomposable metric continuum has uncountably many pairwise disjoint composants. In that case we may, for each $x \in X$, choose $p \in X - \K(x)$. Then $\K (x;p) = \K(x)$ is dense. 
	
	We observe that if $X$ is not coastal at $x$ then $\theta(X)$ is the composant of $\theta(x)$. We can say slightly more.
	
	\begin{lemma} Suppose $X$ fails to be coastal at $x$. There exists a subcontinuum $K \subset X$ including $x$ such that $X/K$ is indecomposable and fails to be coastal at the point $K$. Therefore $X/K$ has exactly one composant.\end{lemma}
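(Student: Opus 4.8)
The existence half of the statement requires essentially no new work: it is exactly what is manufactured in the proof of Theorem~\ref{''}. There $\theta = \varphi\circ\pi$ is exhibited as the $K$-bloom for $K = \pi^{-1}(S)\cup M$, the quotient $\theta(X) = X/K$ is shown to be nondegenerate and indecomposable, and Corollary~\ref{pull} gives that it fails to be coastal at $\theta(x) = K$. Since $x \in M \subset K$, this $K$ satisfies every stated requirement. Writing $Y = X/K$ and letting $k$ denote the point $K \in Y$, the whole task reduces to the purely internal claim that \emph{an indecomposable continuum $Y$ which fails to be coastal at a point $k$ has exactly one composant.}

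The first step would be to show the composant $\kappa(k)$ is all of $Y$. I would argue by contradiction: if $\kappa(k) \ne Y$, choose $p \in Y - \kappa(k)$. Since $\{k\}$ is a proper subcontinuum (here nondegeneracy of $Y$ is used) we have $k \in \kappa(k)$, so $p \ne k$. Every proper subcontinuum containing $k$ is by definition contained in $\kappa(k)$ and hence avoids $p$; consequently $\kappa(k;p) = \kappa(k)$. But the composant of any point of a Hausdorff continuum is dense by boundary bumping, as recorded just after the definition of $\kappa(K;P)$, so $\kappa(k;p)$ is dense and $Y$ is coastal at $k$ --- contradicting the hypothesis. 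Hence $\kappa(k) = Y$, which is the observation flagged in the preamble.

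The second step invokes the fact that in an indecomposable continuum the composants partition the space. The key point is transitivity of the relation ``$u$ and $v$ lie together in some proper subcontinuum'': if $A$ and $B$ are proper subcontinua sharing a point then $A\cup B$ is a subcontinuum, and it cannot equal $Y$ lest $Y = A \oplus B$ be a decomposition, so $A\cup B$ is again proper. This makes the relation an equivalence relation whose classes are precisely the composants, so distinct composants are disjoint. Since $\kappa(k) = Y$ already exhausts $Y$ while the composants partition $Y$, there can be exactly one composant, completing the proof.

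I expect the only genuine content to be this last partition property; everything else is bookkeeping and citation. The mild point to be careful about in the non-metric setting is precisely that the union of two intersecting proper subcontinua is proper, but this is immediate from indecomposability via the displayed decomposition argument, and the density of composants is already granted by boundary bumping. Thus no metric hypotheses enter, and the chain reduces cleanly: not coastal at $k$ forces $\kappa(k)=Y$, and indecomposability then forces uniqueness of the composant.
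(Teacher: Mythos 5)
Your proposal is correct and follows essentially the same route as the paper: take $K = \pi^{-1}(S)\cup M$ from Theorem~\ref{''}, apply Corollary~\ref{pull} to get non-coastalness at the point $K$, observe that non-coastalness forces $\kappa(K)$ to be all of $X/K$ (else $\kappa(K;p)$ would be dense for any $p$ outside the composant), and conclude via disjointness of composants in an indecomposable continuum. The only difference is that you prove the disjointness of composants, which the paper simply cites as well known.
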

	
	\begin{proof}
		
		In the notation of Theorem \ref{''} take $K = (\pi^{-1}(S) \cup M)$ and let $\theta$ be the $K$-bloom. Then $\theta(X)$ is indecomposable. Moreover Corollary \ref{pull} implies that $\theta(X)$ is not coastal at $\theta(x)$. Therefore $\theta(X)$ is the composant of $\theta(x)$; 
		otherwise $\K \big (\theta(x);p \big )$ would be dense for each $p \notin \K \big (\theta(x) \big )$. It is well known that distinct composants of an indecomposable continuum are disjoint. Therefore $\theta(X)$ must be the composant of each of its points. Therefore there is exactly one composant.
	\end{proof}
	
	Thus the study of which continua are coastal reduces to the study of indecomposable continua with exactly one composant. Continua of this sort are a peculiarity of the non-metric realm and were shown to exist by Bellamy \cite{one}.

	\section{Baireness}
	
	\noindent The non-block point existence theorem of Leonel relies on Bing's Theorem \ref{Bing}. A slightly modified version of his proof applies to continua that satisfy a condition on its cardinal invariants. We first define the relevant cardinal invariants, then give the condition itself.
	
	\begin{definition} A space is called $\A$-Baire if every family of $\A$ many open dense subsets has dense intersection.\end{definition}
	
	\begin{definition} The {\it weight} $w(X)$ of the space $X$ is the least cardinality of an open base for the topology.\end{definition}
	\begin{definition} The {\it density} $d(X)$ of the space $X$ is the least cardinality of a dense subset.\end{definition}
	
	Bing's proof of Theorem \ref{Bing} can be adapted to show that if the continuum $X$ is $w(X)$-Baire, it is coastal. If $X$ is metric it is second-countable and thus $w(X) = \aleph _0$. In addition metric continua satisfy the Baire category theorem and hence are $\aleph_0$-Baire. Combining these two facts yields that each metric continuum $X$ is $w(X)$-Baire.
	
	In this section we show it is enough to demand $X$ be $d(X)$-Baire. Denote by $\cD$ the family of continua meeting this condition and recall that $\cD$ contains all separable continua. Since it follows immediately from their definitions that $d(X) \le w(X)$ our result is a direct strengthening of Bing's.
	
	We will require the following well-known facts about cardinal invariants.
	
	\begin{proposition} \label{Baire} Suppose $X$ is $\A$-Baire and $U \subset X$ open. Then $U$ is $\A$-Baire. \end{proposition}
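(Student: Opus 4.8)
The plan is to take an arbitrary family $\{V_\beta\}_{\beta < \A}$ of open dense subsets of $U$ and manufacture from it a family of open dense subsets of the whole space $X$, to which the hypothesis applies. Since $U$ is open in $X$, each $V_\beta$ is already open in $X$; the only defect is that $V_\beta$ need not be dense in $X$, as it misses everything outside $\overline U$. The natural remedy is to set $W_\beta = V_\beta \cup (X - \overline U)$. Each $W_\beta$ is open in $X$, being the union of the open set $V_\beta$ with the open set $X - \overline U$.

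The first substantive step is to check that each $W_\beta$ is dense in $X$. I would argue by testing against an arbitrary nonempty open $O \subset X$. If $O$ meets $X - \overline U$ we are done immediately, since that set lies inside $W_\beta$. Otherwise $O \subset \overline U$, so $O$ meets $U$ and $O \cap U$ is a nonempty open subset of $U$; density of $V_\beta$ in $U$ then forces $V_\beta$ to meet $O \cap U$, hence $W_\beta$ to meet $O$. This case split is the only place any real work occurs.

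With each $W_\beta$ open and dense in $X$, the hypothesis that $X$ is $\A$-Baire gives that $\bigcap_{\beta < \A} W_\beta$ is dense in $X$. It remains to pull this density back to $U$. The key observation is that intersecting with $U$ strips off the added piece: because $U \cap (X - \overline U) = \0$ and each $V_\beta \subset U$, one has $W_\beta \cap U = V_\beta$, and therefore $\big( \bigcap_\beta W_\beta \big) \cap U = \bigcap_\beta V_\beta$. Finally, given any nonempty open $O \subset U$, which is also open in $X$, density of $\bigcap_\beta W_\beta$ in $X$ yields a point of $O$ in $\bigcap_\beta W_\beta$; since $O \subset U$ this point lies in $\bigcap_\beta V_\beta$. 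Hence $\bigcap_\beta V_\beta$ is dense in $U$, and $U$ is $\A$-Baire.

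I expect no serious obstacle here, as the statement is a routine relativization of the Baire property. The one point demanding care is the density of $W_\beta$, where one must remember that a nonempty open set contained in $\overline U$ necessarily meets $U$ itself; everything else is bookkeeping about how the auxiliary sets restrict to $U$.
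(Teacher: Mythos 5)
Your proof is correct. The paper states this proposition without proof, citing it as a well-known fact about cardinal invariants, so there is no argument in the paper to compare against; your argument --- enlarging each open dense $V_\beta \subset U$ to $W_\beta = V_\beta \cup (X - \overline U)$, checking density of $W_\beta$ in $X$ via the dichotomy on whether a test open set meets $X - \overline U$ or lies in $\overline U$, and then restricting back to $U$ --- is exactly the standard relativization argument, and every step (in particular the observation that a nonempty open subset of $\overline U$ must meet $U$, and the identity $W_\beta \cap U = V_\beta$) is sound.
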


	\begin{proposition} \label{Dprop} Suppose $U \subset X$ is open. Then $d(U) \le d(X)$.\end{proposition}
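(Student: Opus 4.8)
The plan is to witness $d(U)$ using a dense subset of $X$ of minimal cardinality, restricted to $U$. First I would fix a dense subset $D \subset X$ with $|D| = d(X)$, which exists by the definition of density as the least cardinality of a dense subset. The natural candidate for a dense subset of $U$ is then $D \cap U$. Its cardinality is immediately bounded: $|D \cap U| \le |D| = d(X)$, so the entire content of the proposition reduces to checking that $D \cap U$ is dense in the subspace $U$.

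To verify density, I would take an arbitrary nonempty relatively open subset $V \subset U$. The key move is to promote $V$ to an open subset of the whole space: since $U$ is open in $X$ and $V$ is open in the subspace $U$, the set $V$ is in fact open in $X$. Density of $D$ in $X$ then forces $D \cap V \ne \0$, and because $V \subset U$ we have $D \cap V = (D \cap U) \cap V$, so this latter intersection is nonempty as well. Hence $D \cap U$ meets every nonempty relatively open subset of $U$, which is precisely the assertion that $D \cap U$ is dense in the subspace $U$. Combining this with the cardinality bound gives $d(U) \le |D \cap U| \le d(X)$, as required.

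The only genuine subtlety — and it is the crux of the argument — is the observation that openness of $U$ allows a relatively open subset of $U$ to be treated as an honestly open subset of $X$. This is exactly where the hypothesis that $U$ is open is used; for a non-open $U$ the restricted set $D \cap U$ can fail to be dense in $U$ altogether. Beyond securing this point the proof is a direct cardinality count, so I anticipate no further obstacle.
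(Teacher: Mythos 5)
Your proof is correct and complete: restricting a minimal dense subset $D$ of $X$ to $U$, and using openness of $U$ to promote relatively open subsets of $U$ to open subsets of $X$, is exactly the standard argument, and your remark that openness of $U$ is the essential hypothesis is well taken. Note that the paper itself offers no proof here — it states Proposition~\ref{Dprop} as a well-known fact about cardinal invariants — so your write-up simply supplies the expected folklore argument.
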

	
	Let the blooms $\pi$ and $\varphi$ be as defined in the proofs of Theorems \ref{red} and \ref{phi} respectively. In the proof of Theorem \ref{reduce} we showed that $\theta  = \varphi \circ \pi$ is the  $K$-bloom for some subcontinuum $K \subset X$. 
	It follows from the surjectivity of $\theta$ that $d\big (\theta(X)\big ) \le d(X)$. 
	
	We can invoke Proposition \ref{Baire} for $U = X-K$ to get the following fact: If $X$ is $\A$-Baire then $\theta(X)$ is $\A$-Baire. In other words $\theta$ cannot reduce the degree of Baireness. Combining these observations gives the following.
	
	\begin{corollary} \label{bloom} The family of continua $\cD$ is a bloom class. \end{corollary}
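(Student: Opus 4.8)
The plan is to show that for an arbitrary bloom $\theta \colon X \to X/K$ with $X \in \cD$ the image $X/K$ again lies in $\cD$; that is, $X/K$ is $d(X/K)$-Baire. The two ingredients recorded in the paragraph preceding the corollary already do most of the work, so the proof is largely a matter of assembling them and checking the interaction between the Baire index and the density. I will work with a proper subcontinuum $K$ (if $K = X$ the quotient is degenerate and there is nothing to prove).

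First I would note the two facts for an arbitrary $K$-bloom $\theta$: the arguments given for the specific composite $\theta = \varphi \circ \pi$ use nothing beyond surjectivity and Proposition \ref{Baire}, so they apply verbatim to any bloom. Namely: (i) since $\theta$ is a continuous surjection it carries a dense set of cardinality $d(X)$ onto a dense set, so $d(X/K) \le d(X)$; and (ii) writing $U = X - K$, which is open because the subcontinuum $K$ is compact hence closed, Proposition \ref{Baire} gives that $U$ is $\A$-Baire whenever $X$ is, and $\theta$ restricts to a homeomorphism of $U$ onto $(X/K) - \{K\}$, transporting the Baire property to the punctured quotient.

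Next I would promote the Baire property from the punctured quotient to all of $X/K$. The subspace $(X/K) - \{K\}$ is open, since its complement is the single point $K$, which is closed in the Hausdorff space $X/K$; and it is dense, for were $\{K\}$ isolated its preimage $K$ would be a proper clopen subset of the connected space $X$, which is impossible. An open dense subspace that is $\A$-Baire forces the ambient space to be $\A$-Baire: given a family of $\A$ many open dense subsets of $X/K$, intersecting each with the dense open subspace yields open dense subsets there, whose intersection is dense in the subspace and hence in $X/K$. This shows $X/K$ is $d(X)$-Baire.

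Finally I would reconcile the index with the density of the quotient. Being $\A$-Baire trivially implies being $\B$-Baire for every $\B \le \A$, since a family of $\B$ many open dense sets may be padded with copies of the whole space to a family of $\A$ many without changing its intersection. Combining this downward monotonicity with fact (i) lets me pass from $d(X)$-Baireness to $d(X/K)$-Baireness, establishing $X/K \in \cD$ and hence that $\cD$ is closed under blooms. I expect the only genuinely delicate points to be the verification that the collapsed point is non-isolated, so that the punctured quotient is dense, together with the downward monotonicity of the Baire index invoked at the end; everything else is transport of structure along the homeomorphism $\theta$ restricted to $X - K$.
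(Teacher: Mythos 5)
Your proof is correct and takes essentially the same route as the paper: apply Proposition \ref{Baire} to the open set $X-K$, transport Baireness along the homeomorphism onto the dense open subspace $(X/K)-\{K\}$, use surjectivity to get $d(X/K)\le d(X)$, and finish with downward monotonicity of the Baire index. You have simply made explicit the details the paper leaves tacit (non-isolation of the collapsed point, promotion of the Baire property from a dense open subspace to the whole space, and the padding argument), and correctly observed that the argument applies to an arbitrary bloom rather than only the composite $\varphi \circ \pi$.
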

	
	The main result of this section relies on the following lemma.
	
	\begin{lemma} \label{nest} Suppose $X \in \cD$. Each nest of open dense subsets  of $X$ has dense intersection.\end{lemma}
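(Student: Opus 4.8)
The plan is to pass to complements and reformulate the statement as a category condition. Given a nest $\cN$ of open dense subsets, I would set $F_N = X - N$ for each $N \in \cN$; these are closed nowhere dense sets, since the complement of a dense open set has empty interior, and they form a chain under inclusion ordered oppositely to $\cN$. As $\bigcap \cN = X - \bigcup_{N \in \cN} F_N$, the intersection is dense precisely when $\bigcup_{N} F_N$ has empty interior. So it suffices to show that this increasing chain of nowhere dense sets cannot cover a nonempty open set, and I would argue this by contradiction, assuming some nonempty open $V$ satisfies $V \subset \bigcup_{N} F_N$.

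The difficulty is that $\cN$ may be far longer than $d(X)$, so the $d(X)$-Baire hypothesis cannot be applied to $\cN$ directly. To get around this I would use density to extract a small witnessing subfamily. By Proposition \ref{Dprop} there is a set $E \subset V$ dense in $V$ with $|E| \le d(V) \le d(X)$. For each $e \in E$, the membership $e \in V \subset \bigcup_N F_N$ lets me choose $N_e \in \cN$ with $e \in F_{N_e}$. The collection $\cM = \{N_e : e \in E\}$ then has at most $d(X)$ members, each open and dense, so the $d(X)$-Baire hypothesis gives that $\bigcap \cM$ is dense; in particular I may pick a point $y \in V \cap \bigcap \cM$.

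The key step, and the place where the nest structure does the real work, is to show that a single member $F_{N_0}$ already contains all of $E$. Since $y \in V \subset \bigcup_N F_N$, fix $N_0 \in \cN$ with $y \in F_{N_0}$, that is $y \notin N_0$. For each $e \in E$ we have $y \in N_e$ because $y \in \bigcap \cM$; since $\cN$ is a nest, $N_0$ and $N_e$ are comparable, and the only possibility consistent with $y \in N_e$ and $y \notin N_0$ is $N_0 \subsetneq N_e$, whence $F_{N_e} \subset F_{N_0}$ and so $e \in F_{N_0}$. Thus $E \subset F_{N_0}$, and as $F_{N_0}$ is closed and $E$ is dense in $V$ this forces $V \subset F_{N_0}$, contradicting the fact that $F_{N_0}$ is nowhere dense. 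I expect this comparability argument to be the crux: it converts the global covering of $V$ by the long chain into containment inside one nowhere dense member, which is what makes the bounded Baire hypothesis sufficient despite the possibly excessive length of $\cN$.
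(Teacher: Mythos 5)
Your proof is correct, and it takes a genuinely different route from the paper's. Both arguments turn on the same counting insight --- tag each point $e$ of a dense set $E \subset V$ with $|E| \le d(V) \le d(X)$ by a member $N_e$ of the nest excluding it --- but the follow-through differs. The paper first replaces the nest by a well-ordered cofinal subfamily $\{U(\A) \colon \A < \GG\}$ with $\GG$ equal to its own cofinality; the tagging map $x \mapsto \min\{\A \colon x \notin U(\A)\}$ then has cofinal image in $\GG$, so regularity forces $|\GG| \le d(V) \le d(X)$, and $d(X)$-Baireness applied to the entire (reduced) chain makes $\bigcap \sU$ dense, contradicting the choice of $V$. (The paper's write-up is slightly garbled at this point: the final citation should be to Proposition \ref{Dprop} rather than Proposition \ref{Baire}, and the concluding application of Baireness is left implicit.) You never well-order anything and never mention cofinality: you apply Baireness only to the small tagged subfamily $\{N_e \colon e \in E\}$, and the nest structure then finishes the job through the comparability step, which traps $E$ --- and hence, since $F_{N_0}$ is closed, all of $V$ --- inside the single nowhere dense set $F_{N_0}$. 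Your execution is more elementary and self-contained, dispensing entirely with the ordinal bookkeeping (extraction of a well-ordered cofinal subfamily, regularization of $\GG$); the paper's route instead exhibits, under its contradiction hypothesis, a cofinal subchain of length at most $d(X)$ and applies the Baire hypothesis to the nest itself, which is closer in spirit to how such lemmas are invoked in Bing-type transfinite constructions, but it establishes nothing that your argument does not.
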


	\begin{proof} Suppose $\sU$ is a nest of open dense subsets of $X$.
		Since the $\sU$ is totally-ordered by reverse-inclusion there is a well-ordered cofinal subset $\mathscr V \subset \sU$. By cofinality we have $\bigcap \sV = \bigcap \sU$. By replacing $\sU$ with $\sV$ we can assume $\sU$ has the form $\{U(\G): \G < \Gamma\}$ for some ordinal $\Gamma$.
		Without loss of generality $\Gamma$ equals its own cofinality.
		
		Now suppose $V \subset X$ is an arbitrary open set. Choose a dense $D \subset V$ with $|D| = d(V)$.
		Without loss of generality $d(V)$ is an ordinal.
		Choose a dense subset $\big \{x(\A): \A < d(V) \big \}$ of $V$. 
		For a contradiction assume $V$ is disjoint from $\bigcap \sU$.
		In particular $x \notin \bigcap \sU$ for each $x \in D$. So we can define the function $\gamma:D \to \GG$ by $\G(x) = \min\big \{\A \in \GG: x \notin U(\A) \big \}$.
		
		Since each $U(\A)$ is open and dense there is some $x \in U(\A) \cap D$. It follows from definition $\A < \G(x)$. Letting $\A \in \GG$ be arbitrary we see $\G(D) \subset \GG$ is cofinal. Since $\Gamma$ equals its own cofinality $\Gamma \cong \G  (D)$ as ordinals.
		In particular $|\Gamma| = | \G (D )| \le |D| =  d(V)$ and so $|\GG| \le d(V)$.
		Proposition \ref{Baire} then implies $|\Gamma| \le d(X)$.
	\end{proof}

	\begin{corollary} \label{nest'} Suppose $X \in \cD$. The union of a nest of closed nowhere dense subsets of $X$ is proper.\end{corollary}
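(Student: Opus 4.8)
The plan is to deduce this directly from Lemma \ref{nest} by passing to complements. Given a nest $\cF$ of closed nowhere dense subsets of $X$, I would form the family $\sU = \{X - F : F \in \cF\}$. Each complement $X-F$ is open since $F$ is closed, and is dense since $F$ --- being closed and nowhere dense --- has empty interior, and the complement of a set with empty interior is dense. Moreover, since $\cF$ is totally ordered by inclusion, $\sU$ is totally ordered by reverse inclusion, so $\sU$ is a nest of open dense subsets of $X$.

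I would then invoke Lemma \ref{nest}, which applies because $X \in \cD$, to conclude that the intersection $\bigcap \sU$ is dense in $X$. Since $X$ is a nondegenerate continuum it is nonempty, and a dense subset of a nonempty space is nonempty, so $\bigcap \sU \ne \0$. By De Morgan's law $\bigcap \sU = \bigcap_{F \in \cF}(X-F) = X - \bigcup \cF$; as this set is nonempty the union $\bigcup \cF$ omits at least one point of $X$ and is therefore proper.

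The argument is essentially a routine dualization of Lemma \ref{nest}, so I do not anticipate any real obstacle. The only point requiring a moment's care is the standard equivalence that a closed set is nowhere dense precisely when it has empty interior, which is exactly what guarantees the complements $X-F$ are dense and hence lets the lemma be applied.
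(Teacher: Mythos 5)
Your proof is correct and is exactly the argument the paper intends: the corollary is stated without proof precisely because it follows from Lemma \ref{nest} by the routine complementation you carry out (closed nowhere dense sets have dense open complements, the nest structure is preserved, and a dense intersection is nonempty, so the union is proper). No gaps; the one point you flag --- that a closed set is nowhere dense iff its interior is empty --- is handled correctly.
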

	
	The hypothesis $X \in \cD$ is essentially a restriction on the density of $X$.
	One might hope this hypothesis can be dropped. The next example shows otherwise.
	
	\begin{example} We give a continuum that is the union of a nest of nowhere dense subcontinua. The functional analysis terminology used in this example can be found in \cite{Rudin}. 
		
		Let $\W_1$ be the first uncountable ordinal and $E = \ell^2(\W_1)$ the Hilbert space of square-summable functions $x\colon \W_1 \to \RR$ with the inner-product $  xy =  $ $  \displaystyle \sum_{\A < \W_1} x(\A) y(\A) $. Define the $\A$-coordinate vector $e_\A$ by $e_\A(\B) = \delta_\A ^\B$ and define the $\B$-coordinate functional $\epsilon_\B$ by $\epsilon_\B(x) = x(\B)$.
		
		Our example is the closed unit ball $B \subset E$ under the weak topology. Theorem 3.15 of \cite{Rudin} says $B$ is weak$^{*}$ compact. But Theorem 12.5 of \cite{Rudin} implies a real Hilbert space is linear-isomorphic to its dual. Therefore $E = E^{**}$, the weak and weak$^*$ topologies coincide on $E$, and the closed unit ball $B \subset E$ is weak-compact. We represent $B$ as the union of a nest of nowhere-dense subsets. 
		
		For each $x \in B$ since $\| x \| < \infty$ only countably many values of $x(\A)$ are nonzero. Since $\W_1$ has uncountable cofinality some $\A < \W_1$ has $x(\B) = 0$ for all $\B > \A$. Let $B(\A) = \{ y \in B \colon y(\B) = 0 \ \forall \B > \A \}$. Then clearly \mbox{$x \in B(\A)$} and $ \bigcup _{\A < \W_1} B(\A) = B$. 
		Recall the functional $\epsilon_\B$ is by definition weak-continuous. Therefore $B(\A) = $ \mbox{$ B \cap \Big ( \bigcap _{\B > \A} \epsilon_\B ^{-1} (0) \Big )$} is closed in $B$. 
		
		To show $B(\A)$ is nowhere dense in $B$ assume otherwise $x \in B(\A)$ is an interior point. Write $\B=\A+1$ and recall $\epsilon_\B y = 0$ for all $y \in B(\A)$.
		Since addition and rescaling are weak-continuous so is the function $f(t) = x + te_{\B}$. 
		Observe $f(0) =x$ and since $x \in B(\A)$ is an interior point continuity gives $f(\E) \in B(\A)$ for some $\E >0$.
		Thus $x + \E e_{\B} \in B(\A)$. Now apply $\epsilon_\B$ to both sides to get $\epsilon_\B(x + \E e_{\B} ) = \epsilon_\B x + \E \epsilon_\B e_{\B}  = 0 + \E = 0$ which is a contradiction. We conclude $B(\A)$ has void interior relative to $B$.\end{example}

	\begin{theorem} \label{refindep} The indecomposable elements of $\cD$ are coastal.\end{theorem}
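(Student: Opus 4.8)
The plan is to fix an arbitrary $x \in X$ and manufacture a single point $p \ne x$ with $\K(x;p)$ dense, by building an increasing nest of proper subcontinua through $x$ whose union is dense yet---crucially---proper. First I would enumerate an open base $\{D^\A : \A < \xi\}$, where $\xi = w(X)$, starting from $G_0 = \{x\}$, and construct proper subcontinua $G_\A \ni x$ by transfinite recursion so that $G_\A$ meets $D^\A$ and the family is increasing. At a successor stage $\A = \B+1$, if $G_\B$ already meets $D^\A$ I set $G_\A = G_\B$; otherwise, since the composant $\K(x)$ is dense it meets $D^\A$, so some proper subcontinuum $M \ni x$ meets $D^\A$, and I set $G_\A = G_\B \cup M$. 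The key point is that $G_\B$ and $M$ share the point $x$, so their union is a subcontinuum, and it is proper because in an indecomposable continuum the union of two proper subcontinua cannot be all of $X$ --- that would be a decomposition. At a limit stage $\La$ I set $G_\La = \overline{\bigcup_{\B<\La} G_\B}$.

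Once the recursion is complete, put $G = \bigcup_{\A<\xi} G_\A$. Since $G_\A$ meets $D^\A$ and $\{D^\A\}$ is a base, $G$ is dense. Each $G_\A$ is a proper subcontinuum of the indecomposable continuum $X$, hence closed and nowhere dense, and the $G_\A$ form a nest; so Corollary \ref{nest'} --- which is where the hypothesis $X \in \cD$ enters --- guarantees $G \ne X$. Thus $G$ is dense and proper, so I may choose $p \in X - G$. Then $p \ne x$ because $x \in G$, and every $G_\A$ is a proper subcontinuum containing $x$ and avoiding $p$; hence $G_\A \subset \K(x;p)$ for all $\A$, giving $G \subset \K(x;p)$ and so $\K(x;p)$ is dense. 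This exhibits $X$ as coastal at $x$, and since $x$ was arbitrary, $X$ is coastal.

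The step I expect to be delicate is the limit stage, where one worries that $\bigcup_{\B<\La}G_\B$ might already be dense, forcing its closure $G_\La$ to equal $X$ and breaking the recursion. The resolution is that this apparent obstacle is in fact the goal: if at any limit stage the accumulated union becomes dense, then it is a dense union of a nest of closed nowhere dense sets, so Corollary \ref{nest'} still forces it to omit a point $p$, and the argument of the previous paragraph applies at once and terminates the construction. Consequently the recursion either runs harmlessly through all $\A < \xi$ with each partial union non-dense, or halts early with a dense proper union; in both cases the required $p$ is produced. I would also remark that this argument is insensitive to how many composants $X$ has, so it covers uniformly both the classical many-composant case and the pathological single-composant indecomposable continua of Bellamy.
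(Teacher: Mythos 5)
Your proof is correct, and it reaches the theorem by a recognizably different route than the paper, although both arguments hinge on the same key tool, Corollary \ref{nest'}. The paper first splits into cases according to whether the composant $\K(x)$ equals $X$. When $\K(x) \ne X$ it is done in one line: composants of an indecomposable continuum are pairwise disjoint and dense, so any $p \notin \K(x)$ gives $\K(x;p) = \K(x)$ dense. Only when $\K(x) = X$ (the one-composant case, the Bellamy-type continua you mention) does the paper run a transfinite recursion, and that recursion is indexed by a dense set $\{y_\A \colon \A < d(X)\}$: it joins $x$ to each $y_\A$ by a nowhere dense subcontinuum $L^\A$, accumulates these into a nest, and invokes Corollary \ref{nest'} at limit stages and at the end exactly as you do. Your construction instead runs uniformly over an open base of size $w(X)$, uses density of the composant $\K(x)$ (true in every continuum, by boundary bumping) to produce the connecting continua, and so never needs the composant dichotomy; it is closer in spirit to the construction in Theorem \ref{red}, specialized to the indecomposable setting where properness of unions and nowhere-density come for free. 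What the paper's indexing buys is a recursion of length $d(X)$, the cardinal intrinsic to the class $\cD$, and an explicit isolation of the one-composant case as the only hard one; what yours buys is uniformity and the absence of case analysis. Since Lemma \ref{nest}, and hence Corollary \ref{nest'}, carries no restriction on the length of the nest (its proof internally passes to a cofinal subnest of size at most $d(X)$), your longer recursion of length $w(X)$ costs nothing, and the argument is sound.

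One small repair is needed. As written, you arrange for $G_\A$ to meet $D^\A$ only at successor stages $\A$, so under an arbitrary enumeration of the base, the basic sets carrying limit indices (or index $0$) may never be met, and density of $G$ does not quite follow. Fix this either by requiring $G_{\B+1}$ to meet $D^\B$ (every $\B < \xi$ has its successor below $\xi$, as $\xi$ is an infinite cardinal), or by also bumping into $D^\La$ at each limit stage $\La$ just as at successors: when the accumulated union is not dense, take its closure and union it with a proper subcontinuum through $x$ meeting $D^\La$; this union is still proper by indecomposability, so nothing else in your argument changes.
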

	
	\begin{proof} 
		Suppose $X$ is an indecomposable continuum of class $\cD$. Let $x  \in X$. Since $X$ is indecomposable distinct composants are disjoint. If $\K(x) \ne X$ choose $p \notin \K(x)$. Then $\K(x;p) = \K(x)$ is dense. 
		
		Now assume that $\K (x) = X$. Then $x$ can be joined to any point by a proper subcontinuum which must be nowhere dense since $X$ is indecomposable.
		
		Let $Y = \{y_\A \colon \A < d(X)\}$ be a dense subset with $x = y_0$. For each \mbox{$1 \le \A < d(X)$} let $L^\A$ be a subcontinuum with void interior joining $y_0$ to $y_\A$. We will define an increasing chain of proper subcontinua $N_\A$ by transfinite recursion on $\B$ beginning with $N_0 = \{y_0\}$.
		
		When $\G = \A + 1$ is a successor ordinal: Let $N_\G = N_\A \cup L^\G$. By construction each of $N_\A$ and $ L^\G$ is closed and nowhere dense, hence proper. It follows that $N_\G$ is closed, nowhere dense and proper as well. $N_\G$ is connected since each of $N_\A$ and $L^\G$ contains $x$.

		When $\G$ is a limit ordinal: Consider $M^\G = \bigcup _{\A < \G} N_\A$. By construction each $N_\A$ is closed and nowhere dense so Corollary \ref{nest'} implies $M^\G \ne X$. $M^\G$ is connected since each $N_\A$ contains $x$. If $M^\G$ is dense choose $p \notin M^\G$. Then $\K (x;p)$ contains each subcontinuum $N_\A$ so is dense. This proves $X$ is coastal at $x = y_0$. 
		Otherwise $\overline {(M^\G)}$ is a proper subcontinuum, so must be nowhere dense. In this case it follows that $N_\G = \overline {(M^\G)} \cup L^\G$ is a proper and nowhere dense subcontinuum.
		
		Assuming no $M^\G$ is dense consider the union $M = \bigcup _{\A < d(X)} N_\A$. This is proper by Corollary \ref{nest'}. Moreover it contains the dense subset $Y$. Therefore it is dense. It follows that $\K(x;p)$ is dense for each $p \notin M$. Thus $X$ is coastal at $x=y_0$.
	\end{proof}
	
	To prove the next theorem we apply  Theorem \ref{reduce} and Corollary \ref{bloom} to the class $\cD$. 
	
	\begin{corollary} \label{dX} Continua of class $\cD$ are coastal.\end{corollary}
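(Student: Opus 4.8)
The plan is to invoke the reduction machinery built up in the previous two sections so that the corollary follows by assembling three earlier results, with essentially no fresh argument. The key observation is that Theorem~\ref{reduce} reduces the coastal property, for any bloom class $\cP$, to its indecomposable members: once every indecomposable continuum of type $\cP$ is known to be coastal, every continuum of type $\cP$ inherits the property. Hence it suffices to confirm the two hypotheses of Theorem~\ref{reduce} with $\cP = \cD$, namely that $\cD$ is a bloom class and that its indecomposable elements are coastal.

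The first hypothesis is supplied verbatim by Corollary~\ref{bloom}, which records that $\cD$ is closed under blooms. I would recall that this rests on the two facts established immediately before that corollary: a surjective bloom $\theta$ satisfies $d\big(\theta(X)\big) \le d(X)$, and, by applying Proposition~\ref{Baire} to the open set $U = X - K$, such a bloom cannot lower the degree of Baireness. Together these show that membership in $\cD$ is preserved under the $K$-bloom, which is exactly what a bloom class requires. The second hypothesis is precisely Theorem~\ref{refindep}, whose proof exploits the density condition defining $\cD$ through Lemma~\ref{nest} and Corollary~\ref{nest'} to run the transfinite construction joining $x = y_0$ to a dense set by nowhere-dense subcontinua.

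With both hypotheses of Theorem~\ref{reduce} verified for the class $\cD$, I would conclude directly that every continuum of class $\cD$ is coastal. I do not anticipate a genuine obstacle here: the substantive work — the transfinite construction of the indecomposable quotient in Theorems~\ref{red} and~\ref{''}, the Baire-category estimate of Lemma~\ref{nest}, and the bloom-stability of $\cD$ — has already been carried out. The only point deserving a word is that the bloom class appearing in Theorem~\ref{reduce} may legitimately be taken to be $\cD$, which is exactly the content of Corollary~\ref{bloom}, so the notation aligns and the application is immediate.
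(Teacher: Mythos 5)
Your proposal is correct and matches the paper's own argument exactly: the paper proves Corollary~\ref{dX} by applying Theorem~\ref{reduce} to the class $\cD$, using Corollary~\ref{bloom} to certify that $\cD$ is a bloom class and Theorem~\ref{refindep} to supply the coastal property for its indecomposable members. No further comment is needed.
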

	
	\begin{corollary}For each separable continuum $X$ and point $x \in X$ some $p \in X$ makes the continuum component of $x$ in $X-p$ dense.\end{corollary}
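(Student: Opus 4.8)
The plan is to read this statement as a restatement of Corollary~\ref{dX} in the language of continuum components, so that the only real work is matching two descriptions of the same dense set. First I would verify that every separable continuum belongs to the class $\cD$. If $X$ is separable then $d(X) = \aleph_0$, and since $X$ is a compact Hausdorff space it is a Baire space and hence $\aleph_0$-Baire; thus $X$ is $d(X)$-Baire, which is exactly what it means to lie in $\cD$. This is the fact recalled just after the definition of $\cD$, so one may also simply cite it.

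With $X \in \cD$ in hand, Corollary~\ref{dX} yields that $X$ is coastal, and by definition being coastal means being coastal at every point. Applying this at the given $x$ produces a point $p \ne x$ for which $\K(x;p)$ is dense in $X$. Since $p \ne x$, the point $x$ lies in $X-p$ and the continuum component of $x$ in $X-p$ is defined; it remains only to identify that component with $\K(x;p)$.

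For the identification I would unwind the definitions. By definition $\K(x;p) = \bigcup\{M \in C(X) \colon x \in M,\ M \ne X,\ p \notin M\}$, and since $p \in X$ the requirement $p \notin M$ already forces $M \ne X$, so the clause $M \ne X$ is redundant. Hence $\K(x;p)$ is precisely the union of all subcontinua of $X$ that contain $x$ and miss $p$, which is the same as the union of all subcontinua of $X-p$ containing $x$ — that is, the continuum component of $x$ in $X-p$. This set is dense by the previous paragraph, which finishes the argument. I expect no genuine obstacle here, as the substantive content is carried entirely by Corollary~\ref{dX}; the one point requiring care is this final bookkeeping step, where one must notice that a subcontinuum omitting $p$ is automatically proper, so that the relative composant $\K(x;p)$ and the continuum component of $x$ in the punctured space $X-p$ denote literally the same subset of $X$.
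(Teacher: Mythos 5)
Your proposal is correct and follows exactly the route the paper intends: separable continua lie in $\cD$, Corollary~\ref{dX} makes them coastal, and the dense relative composant $\K(x;p)$ is literally the continuum component of $x$ in $X-p$ (the clause $M \ne X$ being redundant once $p \notin M$). The paper leaves all of this implicit, so your careful handling of the identification and of the requirement $p \ne x$ is a faithful filling-in of the same argument rather than a different one.
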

	
	Finally we can apply Theorem \ref{irr} to Corollary \ref{dX} to get the following.
	
	\begin{theorem} Every continuum whose density does not exceed its Baire characteristic $-$ in particular every separable continuum $-$ is irreducible about its set of non-block points.\end{theorem}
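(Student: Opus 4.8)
The plan is to recognize that the hypothesis ``$d(X)$ does not exceed the Baire characteristic of $X$'' is nothing other than the condition $X \in \cD$, and then to feed the class $\cD$ into the irreducibility result of Theorem \ref{irr}. Since every ingredient has already been assembled, the proof is essentially a formal combination of three earlier statements.

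First I would unpack the hypothesis. If $X$ is $\A$-Baire for some $\A \ge d(X)$, then $X$ is also $d(X)$-Baire: any family of $d(X)$ many open dense subsets can be enlarged to a family of $\A$ many open dense subsets by repeating members, and this enlargement leaves the intersection unchanged, so dense intersection for the larger family yields dense intersection for the original one. Hence ``$d(X)$ no greater than the Baire characteristic'' is equivalent to ``$X$ is $d(X)$-Baire'', which is exactly membership in $\cD$. Separable continua satisfy $d(X) = \aleph_0$ and are $\aleph_0$-Baire by the Baire category theorem, so they lie in $\cD$ as claimed in the statement.

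Next I would collect the two properties of $\cD$ established earlier. Corollary \ref{bloom} tells us that $\cD$ is a bloom class, so it is closed under precisely the quotient maps required by the hypothesis of Theorem \ref{irr}. Corollary \ref{dX} tells us that every member of $\cD$ is coastal. Together these say exactly that $\cD$ is a bloom class all of whose elements are coastal, which is the hypothesis of Theorem \ref{irr}. Applying that theorem with $\cP = \cD$ then yields that each element of $\cD$ is irreducible about its set of non-block points, and specializing to separable continua gives the second half of the statement.

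I do not expect any genuine obstacle: the mathematical content is carried entirely by the earlier corollaries, and the present theorem is their bookkeeping. The only point meriting a word of care is that the set of non-block points over which irreducibility is asserted should be nonempty, so that the argument in the proof of Theorem \ref{irr} has something to work with. This is automatic, since a coastal continuum has at least two non-block points; thus the set $B$ appearing there is genuinely nonempty and the reasoning applies unchanged.
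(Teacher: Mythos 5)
Your proposal is correct and is exactly the paper's argument: the paper proves this theorem by applying Theorem \ref{irr} to the class $\cD$, with Corollary \ref{bloom} supplying the bloom-class hypothesis and Corollary \ref{dX} supplying coastality. Your additional remarks (the monotonicity of $\A$-Baireness identifying the hypothesis with membership in $\cD$, and the nonemptiness of the set of non-block points via the fact that coastal continua have at least two of them) are sound and simply make explicit what the paper leaves implicit.
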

	
	\section*{Acknowledgements}
	
	The author would like to thank Professor Paul Bankston and Doctor Aisling McCluskey for their help in preparing the manuscript. We are also grateful to the referee for their attention and suggestions.











\end{document}